\definecolor{med-gray}{gray}{0.5}
\definecolor{gray1}{gray}{0.85}
\definecolor{gray2}{gray}{0.90}
\definecolor{gray3}{gray}{0.64}
\definecolor{gray4}{gray}{0.55}
\definecolor{verylight-yellow}{rgb}{1,1,0.7}
\definecolor{yellow}{rgb}{1,1,0.2}
\definecolor{vivid-blue}{rgb}{0.2,0,1}
\definecolor{light-pink}{rgb}{1,0.8,1}
\definecolor{med-pink}{rgb}{1,0.6,1}
\definecolor{aqua}{rgb}{0.0, 1.0, 1.0}
\definecolor{light-gray}{rgb}{0.5, 0.9, 0.5}
\theoremstyle{plain}
\newtheorem*{main}{Main Theorem}
\newtheorem{theorem}{Theorem}[section]
\newtheorem{proposition}[theorem]{Proposition}
\newtheorem{corollary}[theorem]{Corollary}
\newtheorem{lemma}[theorem]{Lemma}
\theoremstyle{definition}
\newtheorem{definition}[theorem]{Definition}
\newtheorem*{ack}{Acknowledgment}
\newtheorem{conjecture}[theorem]{Conjecture}
\newtheorem{remark}[theorem]{Remark}
\newtheorem{example}[theorem]{Example}
\newtheorem*{thm*}{Theorem}
\def\k{{\mathsf{k}}}
\def\cha{\mathrm{char}\ }
\def\Proj{\mathrm{Proj}\ }
\def\Pgl{\mathrm{PGL}}
\def\Hilb{\mathrm{Hilb}}
\def\Gor{\mathrm{Gor}}
\def\<{\left<}
\def\>{\right>}
\def\PP{\mathbb{P}}
\def\AA{\mathbb{A}}
\def\OO{\mathcal{O}}
\def\Hess{\mathrm{Hess}}
\def\Z{\mathfrak{Z}}
\def\max{\mathrm{max}}
\def\Hess{\mathrm{Hess}}
\def\m{\mathfrak{m}}
\def\Ann{\mathrm{Ann}}
\def\ch{\operatorname{char}}
\begin{document}
\date{}

\author[1]{Nancy Abdallah}
\author[2]{Nasrin Altafi}
\author[3]{Anthony Iarrobino}
\author[4]{Alexandra Seceleanu}
\author[5]{Joachim Yam\'{e}ogo}
\affil[1]{\small University of Bor\r{a}s, Bor\r{a}s, Sweden}
\affil[2]{Department of Mathematics, KTH Royal Institute of Technology, S-100 44 Stockholm, Sweden and Department of Mathematical Sciences, University of Copenhagen, 2100 Copenhagen, Denmark}
\affil[3]{Department of Mathematics, Northeastern University, Boston, MA 02115, USA}
\affil[4]{Department of Mathematics, University of Nebraska-Lincoln, Lincoln NE 68588, USA}
\affil[5]{Universit\'e C\^ote d'Azur, CNRS, LJAD, France.}

\title{Lefschetz properties of some codimension three Artinian Gorenstein algebras}
\renewcommand\footnotemark{}
\thanks{\textbf{Keywords}: Artinian Gorenstein algebra, Lefschetz property.}
\thanks{ \textbf{2020 Mathematics Subject Classification}: Primary: 13E10;  Secondary: 13D40, 13H10.}
\thanks{ \textbf{Email addresses:} {nancy.abdallah@hb.se, nasrinar@kth.se, a.iarrobino@northeastern.edu, aseceleanu@unl.edu, joachim.yameogo@unice.fr}}

\maketitle
\vspace{-2em}
\begin{abstract} 
Codimension two Artinian algebras $A$ have the strong and weak Lefschetz properties provided the characteristic is zero or greater than the socle degree. It is open to what extent such results might extend to codimension three AG algebras - the most promising results so far have concerned the weak Lefschetz property for such algebras.
We here show that every standard-graded codimension three Artinian Gorenstein algebra $A$ having low maximum value of the Hilbert function - at most six - has the strong Lefschetz property, provided that the characteristic  is zero.  When the characteristic  is greater than the socle degree of $A$, we show that $A$ is almost strong Lefschetz. This quite modest result is nevertheless arguably the most encompassing so far concerning the strong Lefschetz property for graded codimension three AG algebras.
\end{abstract}

\setcounter{tocdepth}{2}
\tableofcontents

\section{Introduction}
We consider Artinian Gorenstein (AG) algebras over a field $\sf k$. Recall that the Hilbert function of $A$ is the function $T(A):\mathbb{N}\to \mathbb{N}, T(A)_i=\dim_\k A_i$. The maximum value of the Hilbert function $T(A)$ of $A$ is called the {\em Sperner number} of $A$. The {\em socle degree} of such an algebra is $j=\max\{i \mid T(A)_i\neq 0\}$.

\begin{definition}\label{Lefdef} 
Consider a standard-graded  Artinian Gorenstein  algebra $A$ and an element $\ell\in A_1$. 
The pair $(A,\ell)$ is called \emph{weak Lefschetz} (WL) if  the maps $\ell:A_i\to A_{i+1}$ have full rank, that is, rank equal to $\min\{T(A)_i, T(A)_{i+1}\}$ for each integer $i$. 

The pair $(A,\ell)$ with $A$ of socle degree $j$ is called \emph{strong Lefschetz} (SL) if  the maps $\ell^{j-2i}:A_i\to A_{j-i}$ are bijections for each $i\in [0,  \lfloor j/2\rfloor]$.

We call $A$ \emph{weak Lefschetz} or \emph{strong Lefschetz}, respectively, if there is a linear form $\ell$ for which $(A,\ell)$ is WL, or SL, respectively. 
\end{definition}

The SL and WL properties can be defined equivalently in terms of the Jordan type $P_{A,\ell}$ of the map $\ell:A\to A$. The Jordan type is the partition of $\dim_{\sf k} A$ giving the Jordan block decomposition of this linear transformation.  The study of the Jordan type of Artinian algebras was initiated in the context of Lefschetz properties by T. Harima and J. Watanabe and collaborators \cite{Ha1,Ha2,HW,HW2,MW,HMNW,H-W}, since developed by many, then extended to more general Jordan types (see \cite{CGo,H-W,IMM}).

The pair $(A,\ell)$ is weak Lefschetz if and only if the number of parts of $P_{A,\ell}$ is the Sperner number of $A$; the pair is strong Lefschetz if and only if the partition $P_{A,\ell}$ is the conjugate $T(A)^\vee$ of the Hilbert function $T(A)$ - when $T(A)$ is reordered as a partition. For the equivalence of the definitions in the SL case see \cite[Prop. 3.64]{H-W} or \cite[Prop 2.10]{IMM}.  

It was known, as a consequence of J. Brian\c{c}on's work on standard bases, that
in codimension two all standard-graded or local Artinian algebras $\mathcal A$ (calligraphic $\mathcal A$ for local algebra, $A$ for graded) are strong Lefschetz, provided
${\sf k}$ has characteristic zero or characteristic $p>j$, the socle degree of $\mathcal A$ \cite[Proposition 2.15]{IMM}.
A main tool in studying codimension three AG algebras is the Pfaffian structure theorem of D. Buchsbaum, D. Eisenbud \cite{BuEi}. A consequence, realized after work by many, including \cite{Di,CoVa,Ha2} is that graded codimension three AG algebras of a given Hilbert function $T$ are parametrized by a smooth irreducible variety $\Gor(T)$ of known dimension  (see \cite[Chapter~5]{IK} for an account). We say that a family of codimension three AG algebras $A$ is \emph{general} of Hilbert function $T$ if it is parametrized by an (unspecified) open dense subset of $\Gor(T)$.

 Work of T. Harima, J. Migliore, U. Nagel and J. Watanabe \cite{HMNW}  showed that all codimension three graded {\it complete intersections} (CI) are weak Lefschetz when $\cha {\sf k}=0$: this result already uses the Grauert M\"{u}lich theorem for vector bundles on $\mathbb P^2$. The question of whether all codimension three AG algebras $A$ have the weak Lefschetz property - equivalently, whether for a general linear form $\ell$ the partition $P_{A,\ell}$ has the Sperner number of parts - has been reduced in arbitrary characteristic to the case of $A$  compressed Gorenstein of odd socle degree in \cite{BMMNZ}. Here, a \emph{compressed} Gorenstein algebra $A$ of codimension $r$ and socle degree $j$ is one having the maximum possible symmetric Hilbert function $T(A)_i=\min\{r_i,r_{j-i}\}$ where $R={\sf k}[x_1,\ldots,x_r]$ and $r_i=\dim_{\sf k}R_i$. 
 
 The graded compressed Artinian Gorenstein algebras of embedding dimension $r$ and socle degree $j$ form an irreducible family parametrized by an open dense subset of $\mathbb P(S_j)$, where $S={\k}[X_1,\ldots,X_r]$; the algebras $A$  in the family corresponds to their Macaulay dual generators $ F_A\in S_j $.   A \emph{general} compressed Gorenstein algebra of any codimension and socle degree was known to be strong Lefschetz  \cite[Cor.~3.40]{H-W}.  Also monomial AG algebras of  arbitrary codimension and socle degree (they must be CI) are strong Lefschetz (\cite{St} or \cite[Prop. 3.4, Cor. 3.5]{Wa2}). Finally, the second author has shown that for any codimension three Gorenstein sequence $T$ (one occurring for a graded AG algebra, see \S \ref{Gorsec}, Lemma \ref{cod3SIem}) a general element of $\Gor(T)$ is strong Lefschetz, a consequence of \cite[Theorem 3.2]{Al} (see Theorem~\ref{althm} below). The question of which codimension three (graded) Artinian Gorenstein algebras are strong Lefschetz was largely open, outside of those cases. 
 
 Our main result employs the following invariants: the \emph{order} $\nu$ of an Artinian algebra $A=R/I$ given by  $\nu=\min\{i \mid T(A)_i \neq \dim_{\sf k}(R_i)\}$ and the multiplicity $k$ of the Sperner number in the Hilbert function $T(A)$.
Additionally, we need the following definition.
\begin{definition}[Almost strong Lefschetz] We say that an AG algebra $A$ is \emph{almost strong Lefschetz} (almost SL) if all the maps $\ell^{j-2i}:A_i\to A_{j-i}$ for $i$ satisfying $2\le i\le  \lfloor j/2\rfloor$ or $i=0$ are isomorphisms for a general linear form $\ell$. That is, only the map $\ell^{j-2}:A_1\to A_{j-1}$ among those in Definition \ref{Lefdef} may not be an isomorphism if $A$ is almost SL.  
\end{definition}

We are now ready to state our main result.

  \begin{main}
  \label{thm:main}
 The following classes of codimension three standard-graded AG algebras $A$ over a field $\sf k$ of characteristic zero have the strong Lefschetz property. When  ${\sf k}$ is an infinite field of characteristic $p>j$, the socle degree of $A$,  then AG algebras in these classes are almost strong Lefschetz.
  
  \begin{enumerate}[i.]
    \item AG algebras with Sperner number less than or equal six, as described in table \eqref{table1}
   \item some AG algebras of order two, as described in table \eqref{table2}.
  \end{enumerate}
\end{main}
\begin{proof} The proof follows from the characterization of the codimension three Gorenstein sequences of Sperner number at most six or of order two in Corollary \ref{charcor}, and Theorems \ref{SLthm}, \ref{thm1,3,4}, and \ref{thm1,3,5,6}.
\end{proof}
   
 We list in the following two tables the Hilbert functions of AG algebras that are covered by our theorem, that is, the Hilbert functions of all codimension three AG algebras either having Sperner number at most six (Table \ref{table1}), or having order two and not beginning $H=(1,3,5,6,7,\ldots)$ or $H=(1,3,5,7,\ldots)$ (Table \ref{table2}). In the second column we list the pertinent Theorem.
\par
\begin{table}[h]
 \caption { SL property for AG quotients of ${\k}[x,y,z]$ of Sperner number $s\le 6$}\label{table1}
   \begin{equation}
       \begin{array}{|c|c|}
  \hline
  T=H(A), \, k\ge 1&\mathrm{SL}\,\cha \k=0\\\hline\hline
  (1,3^k,1)&\mathrm{Theorem\, \ref{SLthm}} \\\hline
  (1,3,4^k,3,1)&\mathrm{Theorem\,\ref{SLthm}}\\\hline
  (1,3,4,5^k,4,3,1)&\mathrm{Theorem\,\ref{thm1,3,4}}\\\hline
  (1,3,4,5,6^k,5,4,3,1)&\mathrm{Theorem\,\ref{thm1,3,4}}\\\hline
  (1,3,4,5^k,4,3,1)&\mathrm{Theorem\,\ref{thm1,3,4}}\\\hline
  (1,3,5^k,3,1)&\mathrm{Theorem \,\ref{SLthm}}\\\hline
  (1,3,5,6^k,5,3,1)&\mathrm{Theorem\, \ref{thm1,3,5,6}}\\\hline
  (1,3,6^k,3,1),&\mathrm{Theorem \,\ref{SLthm}}\\\hline
  \end{array}
  \end{equation}
  \end{table}
    
 \begin{table}[h]
\caption {SL property for AG quotients of ${\k}[x,y,z]$ of Order 2:}\label{table2}
   \begin{equation}
   \begin{array}{|c|c|}
  \hline
  T=H(A),\, k\ge 1&\mathrm{SL}\,\cha \k=0\\\hline\hline
  (1,3^k,1)&\mathrm{Theorem \,\ref{SLthm}}\\\hline
  (1,3,4^k,3,1)&\mathrm{Theorem \,\ref{SLthm}}\\\hline
   (1,3,5^k,3,1)&\mathrm{Theorem \,\ref{SLthm}}\\\hline
  (1,3,4,5,\ldots, (s-1),s^k,(s-1),\ldots,3,1)&\mathrm{Theorem\,\ref{thm1,3,4}}\\\hline
  (1,3,5,6^k,5,3,1)&\mathrm{Theorem\, \ref{thm1,3,5,6}}\\\hline
  (1,3,5,6,\ldots, (s-1),s^k,(s-1),\ldots, 3,1)&\mathrm{open}\, \ast\\\hline
  (1,3,5,7,\ldots, 2t+1,2t+2,2t+3\ldots, s^k,\ldots 3,1) &\mathrm{open}\,\ast\ast\\\hline
  \end{array}
  \end{equation}
  \end{table}
\vskip 0.2cm
   Main tools on which our results rely  include  a study of the relation between an AG ideal $I$ and $I:x$, which is also an AG ideal, the theory of Macaulay inverse systems, and the connection between Hessian matrices and the Lefschetz properties developed in \cite{MW,Go,GoZa}. Moreover, we employ the morphism from
   $\Gor(T)$ to the punctual Hilbert scheme $\Hilb^H(\mathbb P^2)$ of projective space, where $H$ is the first, non-decreasing portion of $T$, when $T$ has a consecutive subsequence $(s,s,s)$ (Theorem~\ref{pithm}), a consequence of the D. Buchsbaum-D.~Eisenbud Pfaffian structure theorem.  See Section~\ref{s:pi} for details regarding this morphism. 
  
  \section{Tools}

 \subsection{Macaulay inverse systems}
Let $R=\k[x_1,\ldots,x_r]$ be a polynomial ring. Next let $S=\k_{DP}[X_1,\ldots,X_r]$ be a divided power algebra on which $R$ acts by contraction, see e.g. \cite[Appendix A]{IK}: 
$$x_i\circ X_1^{a_1}\cdots X_i^{a_i}\cdots X_r^{a_r}=\begin{cases} X_1^{a_1}\cdots X_i^{a_i-1}\cdots X_r^{a_r} & \text{if} \ a_i>0\\ 0 & \text{if} \ a_i=0\\ \end{cases}.$$

It is well known by  work of  F.H.S. Macaulay \cite{Mac} that a homogeneous ideal $I\subset R$ is $\mathfrak{m}$-primary irreducible - defines an AG quotient algebra $A=R/I$ of socle degree $j$ if and only if there exists a homogeneous form $F\in S_j$ for which $I=\Ann(F)=\{r\in R \ | \ r\circ F=0\}$.  

\begin{definition}
\label{def:F}
Let $R={\sf k}[x_1,\ldots,x_r]$, and let $A=R/I$ be a graded AG algebra of socle degree $j$. A polynomial $F\in S_j$ satisfying $I=\Ann(F)$ is called a {\em Macaulay dual generator} of $A$.
\end{definition}
A Macaulay dual generator of $A$ is unique up to a non-zero constant multiple.
\begin{remark}
\label{rem:B}
  Suppose that $A=R/I$ is an AG algebra with $I=\Ann(F)$ and let $\omega\in A_{j-k}$ and  $J=(I\colon\omega)$. Then by \cite[Lemma 4]{Wa1} we have that $J=\Ann(G)$ where $G=\omega\circ F$. In particular $R/J$ is AG.
 \end{remark}
 
 \subsection{Hessians  and the strong Lefschetz property}
 \label{s:Hessians}
\begin{definition}[{\cite[Definition 3.1]{MW}}]
Let $F$ be a polynomial in $S$ and $A= R/\Ann(F)$ be its associated AG algebra. Let $\mathcal{B}_{i} = \lbrace \alpha^{(i)}_m\rbrace_m$ be a  $\mathsf{k}$-basis of $A_i$. The entries of the  $i$-th Hessian matrix of $F$ with respect to $\mathcal{B}_i$ are given  by
$$
(\Hess^i(F))_{u,v}=(\alpha^{(i)}_u\alpha^{(i)}_v \circ F).
$$
Up to  a non-zero constant multiple the determinant $\det \Hess^i(F)$ is independent of the basis $\mathcal{B}_i$. 
We note that when $i=1$ the form $\Hess^1(F)$ coincides with the usual Hessian.   
\end{definition} 
 T. Maeno and J. Watanabe provided a criterion for AG algebras to be SL and to identify the SL elements using the Hessians. See  \cite[\S 3.6]{H-W} for an account.

\begin{theorem} [{\cite[Theorem 3.1]{MW}}]
\label{thm:hessian}
Assume $\cha \k=0$. A linear form $\ell=a_1x_1+\cdots +a_nx_r\in A_1$ is a strong Lefschetz element of $A=R/\Ann(F)$ if and only if  
$$
(\det\Hess^i(F))(a_1,\dots ,a_n)\neq 0,
$$
for $i=0,1,\dots , \lfloor\frac{j}{2}\rfloor$. In particular, for $i=0,1 ,\dots , \lfloor\frac{j}{2}\rfloor$ the multiplication map $\times \ell^{j-2i} :A_i\longrightarrow A_{j-i}$ has maximal rank if and only if $(\det\Hess^i(F))(a_1,\dots ,a_n)\neq 0$. 
\end{theorem}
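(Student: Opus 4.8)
The plan is to reduce the theorem to the standard fact that an AG algebra $A=R/\Ann(F)$ of socle degree $j$ carries a perfect Gorenstein pairing in each complementary pair of degrees, and then to recognize $\Hess^i(F)$ evaluated at the coefficient vector of $\ell$ as the Gram matrix of the bilinear form that this pairing attaches to the multiplication map $\times\ell^{j-2i}\colon A_i\to A_{j-i}$. Concretely, everything reduces to one contraction identity.

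First I would fix $i\in\{0,1,\dots,\lfloor j/2\rfloor\}$ and record that $\dim_\k A_i=\dim_\k A_{j-i}$, by the symmetry of the Hilbert function of an AG algebra, so that $\times\ell^{j-2i}$ has maximal rank if and only if it is an isomorphism. Since $A_j\cong\k$ via $a\mapsto a\circ F$, the composition $A_i\times A_{j-i}\to A_j\xrightarrow{\ \sim\ }\k$, $(p,q)\mapsto pq\circ F$, is a perfect pairing (Macaulay duality for Gorenstein algebras), hence identifies $A_{j-i}$ with $A_i^\vee$. Under this identification $\times\ell^{j-2i}$ becomes the linear map $A_i\to A_i^\vee$ induced by the symmetric bilinear form
$$
B_\ell(g,h)\;=\;\bigl(g\,h\,\ell^{j-2i}\circ F\bigr)\;=\;\bigl(g\,h\circ(\ell^{j-2i}\circ F)\bigr),\qquad g,h\in A_i,
$$
so $\operatorname{rk}(\times\ell^{j-2i})$ equals the rank of the Gram matrix $\bigl(B_\ell(\alpha^{(i)}_u,\alpha^{(i)}_v)\bigr)_{u,v}$ in the basis $\mathcal B_i$, and $\times\ell^{j-2i}$ is an isomorphism iff that Gram matrix is invertible.

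The key step is the computation that for any form $P\in S_d$ and $\ell=a_1x_1+\cdots+a_rx_r$ one has $\ell^{d}\circ P=d!\cdot P(a_1,\dots,a_r)$ in the contraction action; this is where $\cha\k=0$ is used, to guarantee that $d!=(j-2i)!$ is a unit (and $\cha\k>j$ would suffice for a fixed $i$, since $j-2i\le j$). Applying this with $P=\alpha^{(i)}_u\alpha^{(i)}_v\circ F\in S_{j-2i}$ and $d=j-2i$ yields
$$
B_\ell(\alpha^{(i)}_u,\alpha^{(i)}_v)\;=\;(j-2i)!\,\bigl(\Hess^i(F)\bigr)_{u,v}(a_1,\dots,a_r),
$$
so the Gram matrix of $B_\ell$ is $(j-2i)!$ times the evaluation of the matrix $\Hess^i(F)$ at $(a_1,\dots,a_r)$. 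Therefore $\times\ell^{j-2i}$ has maximal rank — equivalently is an isomorphism — iff $(\det\Hess^i(F))(a_1,\dots,a_r)\ne 0$, a condition that is well defined independently of $\mathcal B_i$ since changing basis multiplies $\det\Hess^i(F)$ by a nonzero scalar. Taking the conjunction over $i=0,1,\dots,\lfloor j/2\rfloor$ and invoking Definition~\ref{Lefdef} gives that $\ell$ is a strong Lefschetz element iff $(\det\Hess^i(F))(a_1,\dots,a_r)\ne0$ for all such $i$, which is the assertion.

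The main obstacle is purely the bookkeeping in the divided-power/contraction formalism: getting the universal constant $d!$ correct and checking it is a unit exactly under the stated hypothesis, together with verifying that the identification $A_{j-i}\cong A_i^\vee$ coming from the Gorenstein pairing is compatible with the basis $\mathcal B_i$ used to form $\Hess^i(F)$ (so that matrices, not just ranks, match up). Once the pairing and the identity $\ell^{d}\circ P=d!\,P(a)$ are in place, the argument is formal.
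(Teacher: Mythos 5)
Your argument is correct and is essentially the standard proof of this result: the paper itself does not prove the statement but cites it to Maeno--Watanabe \cite[Theorem 3.1]{MW}, whose argument is exactly the one you give (identify $\times\ell^{j-2i}$ with the bilinear form $(g,h)\mapsto gh\ell^{j-2i}\circ F$ via the Gorenstein pairing, then use $\ell^{d}\circ P = d!\,P(a)$ to recognize its Gram matrix as a nonzero multiple of $\Hess^i(F)$ evaluated at the coefficients of $\ell$). Your handling of the unit $(j-2i)!$ and of the basis-independence of the vanishing of $\det\Hess^i(F)$ is accurate, so there is nothing to add.
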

 
 The following theorem due to P.~Gordan and M.~Noether in \cite{GoNo} was reproved in \cite[Theorem 1.2]{Los}, \cite{WatdeB}, \cite{BFP}.
 
 \begin{theorem}[Gordan-Noether]\label{GNlem} If $F$ is a form expressed in at most four variables and the characteristic of $\sf k$ is zero, then the Hessian determinant $\det \Hess(F)$ is identically zero  if and only if $F$ is  a cone: that is, if $F$ is annihilated by a linear form (element of $R_1$) in the Macaulay duality.
\end{theorem}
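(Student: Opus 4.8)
The plan is to prove the two implications of the biconditional separately. That a cone has identically vanishing Hessian is elementary; the converse is the substantive content, and it is precisely the restriction to at most four variables that makes it hold.

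For the easy direction, suppose $F$ is annihilated by a linear form $\ell\in R_1$. After an invertible linear change of the variables $X_1,\dots,X_r$ we may assume $\ell=x_r$, so $x_r\circ F=0$ and $F$ is the image of a form involving only $X_1,\dots,X_{r-1}$. Since in characteristic zero the contraction action agrees with ordinary partial differentiation up to the standard rescaling of monomials, $\partial F/\partial X_r=0$; hence the last row and column of $\Hess^1(F)$ vanish and $\det\Hess(F)\equiv 0$. The conclusion is coordinate independent: under $g\in\mathrm{GL}_r$ the polynomial $\det\Hess(g\cdot F)$ is obtained from $\det\Hess(F)$ by a linear substitution followed by multiplication by the nonzero scalar $\det(g)^2$.

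For the hard direction I would first recast the hypothesis geometrically. The affine gradient $\AA^r\to\AA^r$, $x\mapsto\nabla F(x)$, has Jacobian matrix $\Hess(F)(x)$, so $\det\Hess(F)\equiv 0$ is equivalent to the polar map $\Phi=[\partial F/\partial X_1:\cdots:\partial F/\partial X_r]\colon\PP^{r-1}\dashrightarrow\PP^{r-1}$ being non-dominant, hence to the algebraic dependence over $\k$ of the partials $\partial F/\partial X_1,\dots,\partial F/\partial X_r$: there is an irreducible form $G$ with $G(\nabla F)\equiv 0$. Set $X=\overline{\Im\Phi}$ and $m=\dim X\le r-2$. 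Differentiating $G(\nabla F)\equiv 0$ shows that $w(x):=\nabla G(\nabla F(x))$ lies in $\Ker\Hess(F)(x)$ for all $x$, and this is the seed of the Gordan--Noether structure theorem, whose core is that a general fiber of $\Phi$ is a linear subspace $\Pi_q\subseteq\PP^{r-1}$ of dimension $r-1-m$ along which $\nabla F$ is projectively constant. Establishing this linearity of fibers --- showing that the linear span of a general fiber is already contained in it, using that $\Hess(F)$ has constant rank $m+1$ along a general fiber --- is the step I expect to demand the most care; it is the heart of the argument.

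Granting the fiber structure, the bound $r\le 4$ forces $m\in\{0,1,2\}$ and the proof finishes with a short case analysis. If $m=0$ then $\nabla F=g\cdot v$ for a fixed vector $v$ and a form $g$, so Euler's identity $jF=x\cdot\nabla F=(v\cdot x)g$ shows the linear form $v\cdot x$ divides $F$; feeding this back into $\nabla F=g\cdot v$ and inducting on $\deg F$ yields $F=c\,(v\cdot x)^j$, a cone. If $m=1$ then $X$ is a curve and the fibers are hyperplanes forming a pencil with fixed axis, so $\Phi$ factors through the projection from that axis; combined with the closedness of the gradient and Euler's identity this again forces $F$ to be a cone. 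The case $m=2$ --- surface image, line fibers --- occurs only when $r=4$ and is the genuinely delicate Gordan--Noether situation: one uses the linearity of the line fibers together with the companion Gordan--Noether identity relating $F$, $\nabla F$ and $\Pi_q$, and exploits that the ambient space is $\PP^3$, to conclude that $F$ is a cone. This case is where the hypothesis ``at most four variables'' is both used and sharp: for $r=5$ the Perazzo cubic $X_1X_4^2+X_2X_4X_5+X_3X_5^2$ has identically vanishing Hessian while admitting no annihilating linear form.
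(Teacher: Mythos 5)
The paper does not actually prove this statement: it is quoted as a classical theorem of Gordan and Noether, with references to the original 1876 paper and to the modern reproofs of Lossen, Watanabe--de Bondt, and Bricalli--Favale--Pirola. So there is no internal proof to compare against, and your proposal must be judged on its own. Your framing is the right one: the easy direction is correct as written (in characteristic zero, annihilation by a linear form is equivalent to linear dependence of the partials, and a zero row kills the Hessian determinant), and the reduction of the hard direction to ``algebraically dependent partials $\Rightarrow$ linearly dependent partials for $r\le 4$'' via the Jacobian criterion is also correct. The Perazzo cubic is the right witness for sharpness at $r=5$.

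However, the proposal has genuine gaps exactly where you flag them, and they are not small. (1) The assertion that the closure of a general fiber of the polar map $\Phi$ is a linear subspace of $\PP^{r-1}$ is a nontrivial theorem (it is essentially the Gordan--Noether structure result, or in modern language a statement about the polar/Gauss map); you name it as ``the heart of the argument'' but give no argument for it, and the hint ``$\Hess(F)$ has constant rank along a general fiber'' is not by itself a proof that the fiber contains its linear span. (2) In the case $m=1$ you assert that the hyperplane fibers ``form a pencil with fixed axis''; generic disjointness of fibers does not immediately force this (two hyperplanes in $\PP^{r-1}$ always meet in codimension two, so one must analyze where those intersections live relative to the base locus), and no justification is given. (3) The case $m=2$, $r=4$ --- the only case in which the theorem is genuinely hard and the hypothesis $r\le 4$ is truly at work --- is dispatched with ``one uses the companion Gordan--Noether identity \dots to conclude that $F$ is a cone,'' which is a citation of the difficulty rather than a resolution of it. As it stands the proposal is a correct and well-organized road map of the classical proof, with the two or three decisive steps left unproved; to be a proof it would need either to carry out the Gordan--Noether fiber analysis in full or to invoke one of the published treatments (e.g.\ Lossen or Watanabe--de Bondt) for those steps, which is in effect what the paper itself does.
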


  In particular, this implies by Theorem \ref{thm:hessian} that if $\cha \k=0$, setting $j=\deg(F)$, $I=\Ann(F)$ with $I\subset \m^2$, and $A=R/I$  for a polynomial ring $R$  of codimension at most four, the map $\ell^{j-2}:A_1\to A_{j-1}$ is  an isomorphism for a general linear form $\ell\in A_1$. It is this result, which we have not been able to extend to characteristic $p>j$, that led to our introducing the ``almost strong Lefschetz'' terminology.

\subsection{The morphism $\pi:\Gor(T)\to \Hilb^s(\mathbb P^2)$}
\label{s:pi}

The Gorenstein sequences of codimension three in which the Sperner number occurs at least three times play a distinguished role in our results. This is because of the following theorem - a connection between the AG algebras whose Hilbert function is such a sequence to certain 0-dimensional projective schemes in $\mathbb P^2$. Recall that an ideal $J\subset R$ is \emph{saturated} if $J:\m=J$.
We denote by $\underline{s}$ the infinite sequence $(s,s,\ldots)$.

\begin{theorem}[{\cite[Theorem 5.31]{IK}}]
\label{pithm}
Assume $\sf k$ is an infinite field \footnote{In \cite{IK} there is an implicit assumption that $\sf k$ is algebraically closed, but the result holds for $\sf k$ infinite.} and let $T$ be the Hilbert function of a codimension three Artinian Gorenstein algebra $A=R/I$ of socle degree $j$. Suppose  $T\supset (s,s,s)$, that is, $T$ contains a consecutive subsequence of at least three $s$ where $s$ is the Sperner number of $T$,  and let $\tau$ be the smallest integer such that $T(A)_\tau=s$.  Let $H=(T_{\le j/2},s, s, s, \underline{s})$. 
Then\begin{enumerate}[i.]
\item The ideal $J=(I_{\le \tau+1})\subset R$ is a saturated ideal defining a subscheme $\Z=\Proj(R/J)\subset \mathbb{P}^2$, having length $s$, Castelnuovo-Mumford regularity $\tau+1$, and Hilbert function $H$.
\item The scheme $\Z$ is the unique degree-$s$ scheme in $\mathbb P^2$ whose ideal is contained in $I$.
\item The quotient ideal $I/J$ defines the dualizing sheaf $A_\Z$ of $\Z$ \cite{Bo}.
\item The map $I\to J$ defines a morphism:  $\pi:\Gor(T)\to \Hilb^H(\mathbb P^2)\subset \Hilb^s(\mathbb P^2)$ whose image contains the open dense subscheme $\mathrm {Sm}^H(\mathbb P^2)$ parametrizing smooth length-$s$ punctual subschemes of $\mathbb P^2$ having Hilbert function $H$.
\item The minimal resolution of $J$ can be explicitly constructed from the  resolution of $I$.

\end{enumerate}
\end{theorem}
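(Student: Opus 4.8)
The plan is to derive all five assertions from the Buchsbaum--Eisenbud structure theorem \cite{BuEi} together with a careful bookkeeping of the degrees occurring in the minimal free resolution of $A=R/I$; the crucial feature is that a \emph{length-three} plateau $(s,s,s)$ in $T$ splits that resolution. So the first step is to record the shape of the self-dual, length-three resolution of $A$: the minimal generators of $I$ are the submaximal Pfaffians of an alternating matrix $\Phi$ whose entry degrees are determined by $T$, and the generator degrees come in pairs $d\leftrightarrow j+3-d$. Using that a codimension three Gorenstein sequence is an SI-sequence --- so that $T$ is symmetric about $j/2$ and strictly increasing on $[0,\tau]$, with $T_{\tau-1}<s=T_\tau=T_{\tau+1}=T_{\tau+2}$ and $\tau+1\le\lfloor j/2\rfloor$ (Lemma \ref{cod3SIem}) --- I would then show that every minimal generator of $I$ of degree $\le\lfloor j/2\rfloor$ has degree $\le\tau+1$ (the partners lying in degrees $\ge j+2-\tau$), and that no minimal syzygy among the generators of degree $\le\tau+1$ has degree $>\tau+2$. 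It is here that the third copy of $s$ is used: a length-two plateau would permit a minimal generator in degree $\tau+1$ whose syzygies fall outside this window, which is exactly what puts the ``order two'' sequences like $(1,3,5,7,\ldots)$ out of reach.

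Granting this, set $J=(I_{\le\tau+1})$, so that $J_d=I_d$ for $d\le\tau+1$ and hence $H_{R/J}$ agrees with $T$, and with $H$, through degree $\tau+1$; in particular $H_{R/J}(\tau)=H_{R/J}(\tau+1)=s$. For (i) and (v) I would build the minimal free resolution of $R/J$ by extracting from the resolution of $A$ the subcomplex on the generators of degree $\le\tau+1$ and the syzygies among them, and completing it: the degree bounds of the first step force this subcomplex to be a length-two perfect complex with top generator degree $\tau+1$ and first-syzygy degrees $\le\tau+2$. Projective dimension two gives $\operatorname{depth}R/J=1$, so $R/J$ is Cohen--Macaulay of dimension one; thus $J$ is saturated and $\Z=\Proj(R/J)\subset\PP^2$ is zero-dimensional of length $s$, the generator and syzygy degrees yield $\operatorname{reg}\Z=\tau+1$, and --- $R/J$ being Cohen--Macaulay, its Hilbert function is nondecreasing --- being $s$ at degree $\tau$ it is $s$ for all $d\ge\tau$, so $H_{R/J}=H$. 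For (iii), the self-duality of the Buchsbaum--Eisenbud complex exhibits $I/J$, up to a degree twist, as $\Ext^2_R(R/J,R)$, the graded canonical module of $R/J$, which is what defines the dualizing sheaf $A_\Z$ of $\Z$ in the sense of \cite{Bo}.

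For uniqueness, (ii), let $W\subset\PP^2$ be a length-$s$ scheme with $I_W\subseteq I$. Since $I_W$ is saturated with $R/I_W$ of multiplicity $s$, its Hilbert function is nondecreasing to the constant $s$, and $I_W\subseteq I$ forces $H_W(d)\ge T_d$ for every $d$; hence $H_W(d)=s=T_d$ on the whole flat range $\tau\le d\le j-\tau$. Because $\operatorname{reg}\Z=\tau+1$, the truncation $J_{\ge\tau+1}$ is generated in degree $\tau+1$, and $(I_W)_{\tau+1}\subseteq I_{\tau+1}=J_{\tau+1}$ with equal dimensions gives $(I_W)_{\tau+1}=J_{\tau+1}$, so $J_d\subseteq (I_W)_d\subseteq I_d$ for all $d\ge\tau+1$ while the two outer terms agree on $[\tau+1,j-\tau]$; two saturated ideals agreeing in all large degrees are equal, so $I_W=J$ and $W=\Z$. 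For (iv), on points $\pi$ sends $[I]$ to $[(I_{\le\tau+1})]$; over $\Gor(T)$ there is a universal AG ideal, and passing to its degree-$\le\tau+1$ piece and the ideal it generates produces, by the uniformity of the construction and the fact that $\operatorname{reg}=\tau+1$ throughout the family (first step), a flat family of length-$s$ subschemes of $\PP^2$ with Hilbert function $H$, hence a morphism $\pi:\Gor(T)\to\Hilb^H(\PP^2)$. Since $\Gor(T)$ is irreducible so is its image; to see it contains $\Sm^H(\PP^2)$ it suffices to realize each reduced $\Z$ of $s$ points with $H_\Z=H$ as $\pi([I])$ for some AG algebra of Hilbert function $T$ --- taking a general Macaulay dual generator adapted to $\Z$ via the inverse-system description of Remark \ref{rem:B} --- together with a dimension count showing the fibre over a general such $\Z$ has the expected dimension.

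The main obstacle is the very first step: extracting a resolution of $R/J$ from the Buchsbaum--Eisenbud resolution of $I$. Because the Pfaffians entangle the ``low'' generators (degree $\le\tau+1$) with the ``high'' ones (degree $\ge j+2-\tau$), one cannot simply delete rows and columns of $\Phi$; one must prove that, under the hypothesis $T\supset(s,s,s)$, the low-degree block of $\Phi$ is already the presentation matrix of a height-two perfect ideal --- in particular that $I$ acquires no minimal generator in degree $\tau+2$ and no new low syzygy. That no-new-generator claim is exactly the arithmetic content of the third $s$, and once it is in place the rest is essentially formal.
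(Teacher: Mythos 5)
First, note that the paper does not prove this statement at all: Theorem \ref{pithm} is quoted verbatim from \cite[Theorem 5.31]{IK}, so there is no internal argument to compare yours against; the relevant comparison is with the proof in that source. Your overall strategy --- exploit the self-dual length-three Buchsbaum--Eisenbud resolution of $A$ and show that the plateau $(s,s,s)$ forces it to split into a ``low'' block resolving $R/J$ and its dual ``high'' block --- is indeed the mechanism behind the cited result, and your deductions of saturation and Cohen--Macaulayness from projective dimension two, of part (ii) from the Hilbert-function squeeze on $[\tau,j-\tau]$ plus generation of $J$ in degrees $\le\tau+1$, and of part (iii) from self-duality ($I/J\cong\Ext^2_R(R/J,R)$ up to twist, which is Boij's identification of the dualizing module) are all sound \emph{conditional on the splitting}.

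The genuine gap is the splitting itself, which you explicitly defer (``the main obstacle'') and never establish. Concretely, you must prove that $T\supset(s,s,s)$ forces the minimal resolution of $I$ to have no minimal generators in the degree interval $(\tau+1,\,j+2-\tau)$ and no minimal first syzygies in $(\tau+2,\,j+1-\tau)$, so that the alternating matrix $\Phi$ can be put in a block form whose off-diagonal block is a Hilbert--Burch matrix for $J=(I_{\le\tau+1})$. This is not a routine degree count from symmetry alone: the pairing $d\leftrightarrow j+3-d$ by itself does not exclude a generator--syzygy pair sitting in the middle range, and ruling it out is precisely where the analysis of admissible Betti sequences for a given $T$ (Diesel's theorem, \cite[Theorem 5.25]{IK} and the lemmas leading to Theorem 5.31) enters; your heuristic about what a length-two plateau would permit is suggestive but is not an argument. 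A secondary, smaller gap is in (iv): to see that the image of $\pi$ contains $\Sm^H(\PP^2)$ you need more than ``a general Macaulay dual generator adapted to $\Z$'' --- you must show that for a general smooth $\Z$ with $H_\Z=H$ and a general $F$ in degree $j$ of its inverse system, the catalecticants achieve maximal rank so that $H(R/\Ann(F))$ equals $T=\bigl(\min\{H_i,H_{j-i}\}\bigr)_i$ exactly; this maximal-rank statement is a nontrivial input from \cite{IK}, not a formality. Until the resolution-splitting lemma is supplied, the proposal is a correct outline of the known proof rather than a proof.
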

A similar result is valid when the Gorenstein sequence $T\supset (s,s)$ or just has Sperner number $s$, but is restricted to a proper scheme sublocus $\Gor_{sch}(T)\subset \Gor(T)$ \cite[Theorems~5.39,5.46]{IK}.
\begin{definition}
\label{def:tightann}
In the language of \cite[\S 5.1]{IK}, if the Macaulay dual generator for the AG algebra $A$ is $F$ (Definition \ref{def:F}) and if $A$ has Sperner number $s$, then a length-$s$ scheme $\Z\in \mathbb{P}^2$ such that $J=I(\Z)\subset I$ is termed a \emph{tight annihilating scheme} of $F$. 
\end{definition}
In particular the scheme $\Z$ in Theorem \ref{pithm}  is a tight annihilating scheme of $F$.
A tight annihilating scheme is unique under some further conditions \cite[Theorem 5.3]{IK}. The role of tight annihilating scheme here is due to the following Lemma.

\begin{lemma}
\label{tightlemma}
 Let $F\in {\sf k}[X_1,\ldots,X_r]_j$ be the Macaulay dual generator of the AG algebra $A={\sf k}[x_1,\ldots,x_r]/I, I=\Ann F$ of Hilbert function $T$. Assume that $F$ has a tight length-$s$ punctual annihilating scheme $\Z$, and suppose $\tau=\tau(\Z)=\min\{i\mid T_i=s\}$.  The dimension-one algebra $B={\sf k}[x_1,\ldots ,x_r]/I(\Z), I(\Z)=(I_\le {j/2})$ of Hilbert function $H=(T_{\le j/2},\underline{s})$ has a non-zero divisor $\ell\in A_1$. For each pair $(u,v)$ with $0\le u\le v\le j-\tau$ the multiplication homomorphism $\ell^{v-u}: A_u\to A_v$ is an injection - has maximal rank. In particular
for $\tau\le u\le j-\tau$ this map is an isomorphism, and $\ell^{j-2i}:A_i\to A_{j-i}$ is an isomorphism for 
$\tau\le i\le j/2$.
\end{lemma}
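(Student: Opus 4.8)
The plan is to extract a regular element from the one-dimensional graded algebra $B = R/I(\Z)$ and transfer injectivity statements along the surjection $B \twoheadrightarrow A$ in the relevant degree range. First I would observe that $B$ has Hilbert function $H = (T_{\le j/2}, \underline{s})$, which is eventually constant equal to $s$, so $\dim B = 1$; since $\sf k$ is infinite, a general linear form $\ell \in B_1$ avoids all associated primes of $B$ and is therefore a non-zero divisor on $B$. Concretely, because $\Z$ is a length-$s$ punctual scheme in $\mathbb{P}^2$, the radical of $I(\Z)$ is an intersection of finitely many height-two primes, and a general $\ell$ lies in none of them; this gives the asserted non-zero divisor, which I will also view as an element of $A_1$ via the natural surjection $R/I(\Z) = B \to A = R/I$ valid since $I(\Z) = (I_{\le j/2}) \subseteq I$.

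Next I would establish the injectivity of $\ell^{v-u}\colon B_u \to B_v$ for all $0 \le u \le v$. This is immediate from $\ell$ being a non-zero divisor: if $b \in B_u$ and $\ell^{v-u} b = 0$ then $b = 0$. The key point is now a comparison of $A$ and $B$ in low degrees. I claim $A_i = B_i$ for $i \le j - \tau$. Indeed $I_i = I(\Z)_i$ for $i \le j/2$ by definition of $I(\Z) = (I_{\le j/2})$; and for $j/2 < i \le j - \tau$ one has $T_i = s = H_i$ by the symmetry of the Gorenstein Hilbert function $T$ together with $\tau = \min\{i \mid T_i = s\}$ (so $T_i = s$ for $\tau \le i \le j - \tau$), and since $I(\Z)_i \subseteq I_i$ always, equality of dimensions forces $I_i = I(\Z)_i$, hence $A_i = B_i$ in that range. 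Therefore for $0 \le u \le v \le j - \tau$ the map $\ell^{v-u}\colon A_u \to A_v$ is literally the same map as $\ell^{v-u}\colon B_u \to B_v$, hence injective — that is, of maximal rank, since $T$ is non-decreasing on $[0,\tau]$ and constant on $[\tau, j-\tau]$ so the source never exceeds the target in dimension there.

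Finally, for the isomorphism statements, I would note that when $\tau \le u \le v \le j - \tau$ both $A_u$ and $A_v$ have dimension exactly $s$, so an injection between them is an isomorphism; specializing to $u = i$, $v = j - i$ with $\tau \le i \le j/2$ (which forces $j - i \le j - \tau$) gives that $\ell^{j-2i}\colon A_i \to A_{j-i}$ is an isomorphism, as desired. The only genuinely substantive point is the identification $A_i = B_i$ for $j/2 < i \le j-\tau$, i.e.\ that the Gorenstein algebra $A$ and the truncated-ideal algebra $B$ agree not just up to the midpoint but all the way out to degree $j-\tau$; this rests on the symmetry of $T$ and on $\Z$ being a \emph{tight} annihilating scheme (so that $T_i$ equals the Sperner number $s$ precisely on $[\tau, j-\tau]$), and I expect verifying this bookkeeping carefully — rather than the existence of the regular element — to be the main thing to get right. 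Everything else is formal.
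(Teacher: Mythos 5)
Your proof is correct and is essentially the argument the paper intends: the lemma is stated there without proof, but every place it is invoked runs exactly your route, namely that a general linear form is a non-zero-divisor on the one-dimensional ring $B$ and that $A_i=B_i$ for $i\le j-\tau$, so injectivity transfers. Two small points deserve to be made explicit: a general $\ell$ is a non-zero-divisor on $B$ because $I(\Z)$ is \emph{saturated} (so $\m$ is not an embedded prime and the associated primes of $B$ are exactly the finitely many minimal primes of the points of $\Z$, not merely the primes of the radical); and the equality $T_i=s$ on $[\tau,j-\tau]$ requires not just symmetry but also that $T$ is non-decreasing up to $j/2$, which follows from $T_{\le j/2}=H_{\le j/2}$ together with $H$ being non-decreasing since $B$ is one-dimensional Cohen--Macaulay.
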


\subsection{Gorenstein sequences in codimension three}\label{Gorsec}
 Recall that an \emph{O-sequence} $T=(1,t_1,\ldots , t_j,0)$ is one that occurs as the Hilbert function of an Artinian algebra: it satisfies certain Macaulay conditions \cite[\S 4.2]{BH}. A \emph{Gorenstein sequence} $T=(1,t_1,\ldots, t_j=1,0)$ is a symmetric sequence that occurs as the Hilbert function of a standard-graded Artinian Gorenstein algebra.
 We need
 \begin{definition}[SI sequence]
\label{Tdef}  Let $T=(1,r,\ldots t_i,\ldots, r, 1_j)$ be a sequence symmetric about $j/2$.
 Let $j'=\lfloor j/2\rfloor$. We say that $T$ is an \emph{SI-sequence}, if, letting $\delta_i(T)=t_i-t_{i-1}$, we have
\begin{equation}\label{DTeq}
\Delta T=(1,\delta_1(T),\delta_2(T),\ldots, \delta_{j' }(T)) \text { is an O-sequence}.
\end{equation}
\end{definition}
 
 The Pfaffian structure theorem of D. Buchsbaum and D. Eisenbud showed that a codimension three Gorenstein ideal $I$ has a  length 3 minimal resolution where the middle map is an alternating matrix whose diagonal Pfaffians (square roots of the diagonal minors) are the generators of $I$ (\cite{BuEi}, also see \cite[Theorem B2, Appendix B.2]{IK} for the statement. 
 
  A consequence of the Pfaffian structure theorem for Artinian Gorenstein algebras $A$ of codimension three AG, is to characterize their Hilbert functions.
  \begin{lemma}\label{cod3SIem}(R. Stanley \cite[Theorem 4.2]{St}, see also \cite[Theorem 5.25i]{IK}) The symmetric sequence $T$ with $r=3$ is a Gorenstein sequence if and only if $T$ is a SI-sequence.
  \end{lemma}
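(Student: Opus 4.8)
The plan is to prove the two implications of the equivalence separately, passing in each direction between codimension three Gorenstein ideals in $R=\k[x,y,z]$ and codimension two Cohen--Macaulay ideals by means of the Buchsbaum--Eisenbud Pfaffian structure theorem. Making that dictionary precise is, I expect, the main obstacle; the Macaulay bounds for O-sequences and the classification of $h$-vectors of points in $\mathbb P^2$ do the rest.

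For ``$T$ is a Gorenstein sequence $\Rightarrow$ $T$ is an SI-sequence'' I would argue as follows. Let $A=R/I$ be codimension three graded AG with $H(A)=T$ of socle degree $j$, and write $j'=\lfloor j/2\rfloor$. By the Pfaffian structure theorem $I$ is minimally generated by the $n$ sub-maximal Pfaffians $g_1,\dots,g_n$ ($n$ odd) of an alternating matrix $\phi$, and $A$ has the self-dual minimal free resolution
$$0\to R(-j-3)\to\bigoplus_{k=1}^n R\bigl(-(j+3-d_k)\bigr)\xrightarrow{\ \phi\ }\bigoplus_{k=1}^n R(-d_k)\to R\to A\to 0,\qquad d_k=\deg g_k,$$
so that $H_A(t)(1-t)^3=1-\sum_k t^{d_k}+\sum_k t^{\,j+3-d_k}-t^{j+3}$ and $\sum_k d_k=\tfrac12(n-1)(j+3)$. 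The key point is that the $(a,b)$ entry of $\phi$ has degree $j+3-d_a-d_b$, so two generators of degree exceeding $(j+3)/2$ cannot interact in $\phi$; combining this with the minimality of $\phi$ forces a block of zeros indexed by the high-degree generators, and from the numerator of $H_A(t)$ together with the Hilbert--Burch matrix built from the off-diagonal block of $\phi$ one extracts a codimension two Cohen--Macaulay ideal $\mathfrak c\subset R$ whose $h$-vector coincides, up to trailing zeros, with $\Delta T$. (When there are no high-degree generators this is immediate: the ``dual'' numerator terms do not reach degree $j'$, so $\Delta T$ is directly the Hilbert function of a codimension $\le 2$ algebra read off from $1-\sum_k t^{d_k}$.) Since the $h$-vector of a codimension two Cohen--Macaulay ideal is the Hilbert function of an Artinian quotient of $\k[x,y]$, it is an O-sequence; hence $\Delta T$ is an O-sequence and $T$ is an SI-sequence, and in particular $T$ is unimodal with its increase confined to degrees $\le j'$. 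I would emphasize that this is not a soft consequence of a general hyperplane section: for a general linear form $\ell$ the correction terms satisfy $\dim_{\k}(0:_A\ell)_{i-1}=\dim_{\k}(A/\ell A)_{j+1-i}$, and these need not vanish for $i\le j'$ unless one already knows the weak Lefschetz property for $A$, which is open; the structure-theoretic input above is what makes the argument go through.

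For the converse, ``$T$ is an SI-sequence $\Rightarrow$ $T$ is a Gorenstein sequence,'' I would reverse this construction. By hypothesis $\Delta T=(1,2,\delta_2,\dots,\delta_{j'})$ is an O-sequence whose value in degree $1$ is at most $2$, so by Macaulay's theorem it is the $h$-vector of a length-$s$ punctual subscheme $\Z\subset\mathbb P^2$, where $s=T_{j'}$ is the Sperner number; equivalently $\Z$ has Hilbert function $H=(T_{\le j'},\underline{s})$. If $T$ contains three consecutive copies of $s$, Theorem~\ref{pithm} supplies the morphism $\pi:\Gor(T)\to\Hilb^H(\mathbb P^2)$ whose image meets $\Sm^H(\mathbb P^2)\neq\emptyset$, so $\Gor(T)\neq\emptyset$ and $T$ is a Gorenstein sequence; when $s$ occurs only once or twice one argues in the same spirit on the sublocus $\Gor_{\mathrm{sch}}(T)$ using \cite[Theorems 5.39, 5.46]{IK}. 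Alternatively, and without reference to $\mathbb P^2$, one completes a Hilbert--Burch matrix of $I(\Z)$ to an alternating matrix of odd size whose sub-maximal Pfaffians then generate a codimension three Gorenstein ideal with Hilbert function $T$. Either way the crux is the same: the precise, degree-compatible passage between the Pfaffian skew matrix of a codimension three Gorenstein ideal and a Hilbert--Burch matrix of a codimension two Cohen--Macaulay ideal with $h$-vector $\Delta T$, and checking that this passage preserves the Cohen--Macaulay property and the prescribed degrees is where the real work lies. For complete proofs see \cite[Theorem 4.2]{St} and \cite[\S 5, Theorem 5.25]{IK}.
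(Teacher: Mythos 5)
The paper itself gives no proof of Lemma \ref{cod3SIem}: it is quoted from \cite[Theorem 4.2]{St} and \cite[Theorem 5.25i]{IK}, so there is no internal argument to compare yours against, and your overall strategy (Buchsbaum--Eisenbud for the forward implication, a points-in-$\mathbb{P}^2$/Hilbert--Burch construction for the converse) is indeed the strategy of those sources. There is, however, one genuine logical flaw: your primary argument for ``SI $\Rightarrow$ Gorenstein'' is circular. Theorem \ref{pithm} takes as its hypothesis that $T$ \emph{is} the Hilbert function of a codimension three Artinian Gorenstein algebra, so the morphism $\pi:\Gor(T)\to\Hilb^H(\mathbb{P}^2)$ and the assertion that its image contains $\Sm^H(\mathbb{P}^2)$ cannot be invoked to prove $\Gor(T)\neq\emptyset$. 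The non-circular route --- which your ``alternatively'' clause gestures at, and which is what \cite[\S 5]{IK} and the proof of Theorem \ref{althm}(ii) actually carry out --- is to start from a smooth scheme $\Z\subset\mathbb{P}^2$ with $h$-vector $\Delta T_{\le j'}$ (Maroscia), take a suitably general $F$ of degree $j$ in the inverse system of $I(\Z)$, and verify that $H(R/\Ann(F))=T$; or else to complete a Hilbert--Burch matrix to an odd-size alternating matrix with compatible degrees. That verification is the substance of the converse and is not performed in your sketch.

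In the forward direction your degree bookkeeping for $\phi$ is correct ($\deg\phi_{a,b}=j+3-d_a-d_b$, so minimality forces a zero block on the high-degree generators, and the rank condition $\mathrm{rank}\,\phi=n-1$ bounds that block's size by $(n+1)/2$), but the decisive step --- that the complementary off-diagonal block is a Hilbert--Burch matrix of a codimension two Cohen--Macaulay ideal whose $h$-vector agrees with $\Delta T$ in degrees $\le j'$, including the case where some $j+3-d_k\le j'$ so that the ``syzygy'' terms of the numerator do interfere with the first half --- is asserted rather than proved, and it is precisely where Stanley's and Iarrobino--Kanev's work lies. As written, your text is an accurate and well-oriented roadmap to the cited proofs rather than a self-contained argument; since the paper treats the lemma as a citation, that is acceptable, but the circular use of Theorem \ref{pithm} should be removed.
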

  We denote by $\nu(T)$, the \emph{order} of $T$, the smallest $\nu$ such that $T_\nu\not=\dim_{\sf k}R_\nu$: this is the order $\nu(I)$ - lowest degree of a generator of any ideal $I$ defining a quotient $A=R/I$ of Hilbert function $T$.
 The necessary and sufficient condition for $\Delta T$ to be an O-sequence of an algebra of codimension two is
\par
\begin{equation}\label{DT2eq} \delta_i(T)=i+ 1\text{ for $i<\nu(T)$ and }\delta_i(T)\ge \delta_{i+1}(T) \text{ if } i\ge \nu(T).
\end{equation}

This allows us to determine the possible Gorenstein sequences for codimension three AG algebras having order two or Sperner number at most six. These are summarized in tables \eqref{table1}, \eqref{table2}.

\begin{corollary}
\label{charcor} 
The codimension three Gorenstein sequences of Sperner number at most six are
$T=(1,3^k,1),(1,3,4^k,3,1),(1,3,4,5^k,4,3,1), (1,3,4,5,6^k,5,4,3,1), (1,3,5^k,3,1)$, \par\noindent $(1,3,5,6^k,5,3,1) $ and $(1,3,6^k,3,1)$ where $k\ge 1$. The additional Gorenstein sequences of order two are $T=(1,3,4,5,\ldots, s-1,s^k,s-1,\ldots,3,1),   (1,3,5,6,\ldots, s-1,s^k,s-1,\ldots, 3,1)$, and
  $(1,3,5,7,\ldots, 2t+1,2t+2,2t+3\ldots, s^k,\ldots 3,1)$, where $s\ge 7$ and $k\ge 1$.
\end{corollary}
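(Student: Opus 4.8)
## Proof proposal for Corollary \ref{charcor}

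The plan is to argue purely combinatorially from the characterization of codimension three Gorenstein sequences as SI-sequences (Lemma \ref{cod3SIem}), together with the explicit description of when $\Delta T$ is a codimension-two O-sequence given in \eqref{DT2eq}. The key observation is that an SI-sequence $T$ in codimension three is completely determined by its first-difference $\Delta T = (1, \delta_1, \delta_2, \ldots, \delta_{j'})$ on the non-decreasing range $[0, j']$, where $j' = \lfloor j/2 \rfloor$, and by \eqref{DT2eq} this sequence must satisfy $\delta_i = i+1$ for $i < \nu$ and be non-increasing from $\nu$ onward (here $\nu = \nu(T)$). Since $T_1 = 3$ forces $\nu \ge 2$, and $\delta_1 = 2$ always, the only freedom is in choosing $\nu$ and then a non-increasing sequence of positive integers starting at $\delta_\nu \le \nu+1$ and eventually stabilizing. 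The Sperner number $s = \max_i T_i$ equals $T_{j'}$ (by symmetry and unimodality of $T$, which follows from $\Delta T$ being an O-sequence on $[0,j']$), and $s = 1 + \sum_{i=1}^{j'} \delta_i$ when $j$ is even, with the analogous expression when $j$ is odd.

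First I would treat the Sperner-number-at-most-six case. Since $\nu \ge 2$ and $\delta_1 = 2$, we have $T_2 = 3$, so if the algebra is Artinian with $s \le 6$ we get $T = (1,3,\ldots)$. Now I enumerate by the value of $\nu$. If $\nu = 2$, then $\delta_2 \le \delta_1$, but $\delta_2$ could be $0,1,$ or $2$ (being part of an O-sequence in codim two, after $\delta_1 = 2$ the value can drop to anything $\le 2$); tracking $T_2 = 3$ and the allowed non-increasing tails of bounded sum gives exactly $(1,3^k,1)$, $(1,3,4^k,3,1)$, $(1,3,5^k,3,1)$, $(1,3,6^k,3,1)$, $(1,3,4,5^k,4,3,1)$, and $(1,3,4,5,6^k,5,4,3,1)$. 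If $\nu = 3$, then $\delta_2 = 3$, so $T_2 = 3, T_3 = 6$, and then $\delta_i$ must be non-increasing from $i = 3$; since $s \ge T_3 = 6$ we must have $s = 6$ exactly, the tail of $\delta$'s must be $\delta_3 = 3$ repeated (value $3$, since it cannot increase and the partial sum is already $6$) then drop to $0$ — wait, actually $\delta_3 \le 3$ and $T_3 = 3 + \delta_3$; requiring $T_3 \le 6$ and the sequence to reach a plateau gives $(1,3,5,6^k,5,3,1)$ and $(1,3,6^k,3,1)$ already listed, and one checks no genuinely new sequence arises. If $\nu \ge 4$, then $T_3 = \dim R_3 = 10 > 6$, impossible. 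The bookkeeping — making sure every admissible non-increasing tail of the appropriate bounded partial sum is accounted for and the symmetric completion is a valid SI-sequence — is the part that requires care but is entirely mechanical.

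Next, for the order-two case ($\nu = 2$) with Sperner number possibly larger than six, I would note that now $\Delta T = (1, 2, \delta_2, \delta_3, \ldots)$ with $(2, \delta_2, \delta_3, \ldots)$ non-increasing after the initial $\delta_1 = 2$, but in fact the codim-two O-sequence condition \eqref{DT2eq} with $\nu = 2$ permits $\delta_2 \le 2$ while $\delta_1 = 1+1 = 2$; however one must recall that an O-sequence can also have $\delta_2 = 3$ only if $\nu \ge 3$, so with $\nu = 2$ we have $\delta_2 \in \{0,1,2\}$. The three families in the corollary correspond precisely to the three ways the non-increasing positive-integer sequence $2 = \delta_1 \ge \delta_2 \ge \cdots$ can behave before stabilizing at its plateau value and then dropping: either it stays at $2$ then the plateau is reached (giving $T = (1,3,5,\ldots)$-type with even gaps, i.e. the $(1,3,5,6,\ldots,s^k,\ldots)$ and $(1,3,5,7,\ldots)$ families depending on whether and when it drops from $2$ to $1$), or it drops to $1$ immediately after $\delta_1$ (giving $T = (1,3,4,5,\ldots,s^k,\ldots,3,1)$), and the remaining family interpolates. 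Writing out the resulting $T$ for each pattern of the $\delta$-sequence and confirming the symmetric extension past $j'$ is again a valid SI-sequence completes the argument. The main obstacle is not conceptual but organizational: one must be sure the enumeration of non-increasing bounded integer sequences is exhaustive and that each yields a legitimate, symmetric Gorenstein sequence, with no duplicates and no omissions; I would present this as a short case table rather than prose.
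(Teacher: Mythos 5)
Your proposal takes essentially the same route as the paper: reduce to the SI-sequence characterization of Lemma \ref{cod3SIem}, use \eqref{DT2eq} to describe the admissible first-difference vectors $\Delta T=(1,\delta_1,\delta_2,\ldots)$ on the non-decreasing half, and enumerate the non-increasing tails of bounded sum; the resulting list of sequences agrees with Corollary \ref{charcor}. One bookkeeping slip worth fixing: since $\nu=2$ means $T_2<\dim_{\sf k}R_2=6$, the sequence $(1,3,6^k,3,1)$ has order $3$, not $2$, while $(1,3,5,6^k,5,3,1)$ (difference vector $(1,2,2,1,\underline{0})$) has order $2$ and belongs in your $\nu=2$ list rather than appearing first under $\nu=3$; your final union is nevertheless the correct set of seven sequences, and your separate order-two enumeration via $\delta_2\in\{0,1,2\}$ recovers the three $s\ge 7$ families correctly. (There is also a recurring index shift, e.g.\ ``$T_2=3$'' where $T_1=3$ is meant, which should be cleaned up but does not affect the argument.)
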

\begin{proof}
From \eqref{DT2eq} we obtain under the assumption that the Sperner number does not excede 6 that $\delta_i(T)=(1,2,\underline{0})$ or $\delta_i(T)=(1,2,1,\underline{0})$ or $\delta_i(T)=(1,2,1,1,\underline{0})$ or $\delta_i(T)=(1,2,1,1,1,\underline{0})$ or $\delta_i(T)=(1,2,2,\underline{0})$. This is an exhaustive list because the Sperner number is the sum of the $\delta_i(T)$ vector. This yields the first claim.

Under the assumption that the order is two the difference vector must have the form $\delta_i(T)=(1,2,\underline{0})$ or $\delta_i(T)=(1,2,1^{s-3},\underline{0})$ or $\delta_i(T)=(1,2^{t},1^{s-2t-1}, \underline{0})$. The first case was discussed above. The second  yields $T=(1,3,4,5,\ldots, s-1,s^k, s-1,\ldots,3,1)$. The third yields both of the remaining Hilbert functions according to whether $t=2$ or $t>2$.
\end{proof}
We now elaborate on structural aspects for one of the above mentioned Hilbert sequences.

\begin{lemma}
\label{lem**}
Assume that  $\sf k$ is of characteristic zero, or infinite of characteristic $p\geq 3$.
 
If a Gorenstein sequence has the form  
\begin{equation}
\label{specialeqn}
T = (1,3,5,7,\ldots, 2t+1,2t+2,2t+3,\ldots, s^k,\ldots 3,1_j),
\end{equation}
 with $\delta_i(T)=(1,2^{t},1^{s-2t-1}, \underline{0})$ for $t\geq 3, s\geq 2t+1$, then any AG algebra $A=\k[x,y,z]/I$ of Hilbert function $T$ satisfies $I_2\cong (xy)$ or $I_2\cong (z^2)$, under $\Pgl(3)$ action.
\end{lemma}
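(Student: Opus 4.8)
Proof plan for Lemma \ref{lem}.**

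The plan is to analyze the degree-two piece $I_2$ of the ideal directly, using the fact that $\nu(T) = 2$ forces $\dim_{\sf k} I_2 = \dim_{\sf k} R_2 - T_2 = 6 - 5 = 1$, so $I_2$ is spanned by a single nonzero quadratic form $q \in R_2 = {\sf k}[x,y,z]_2$. The claim is then purely a statement about the $\Pgl(3)$-orbit of the point $[q] \in \mathbb{P}(R_2)$: we must show $q$ is equivalent either to $xy$ (rank-two quadric) or to $z^2$ (rank-one quadric), i.e., that $q$ cannot have rank three. Over an algebraically closed field of characteristic $\neq 2$, every quadratic form is diagonalizable, so its $\Pgl(3)$-orbit is determined by its rank, which is $1$, $2$, or $3$; the content of the lemma is ruling out rank $3$. (For ${\sf k}$ merely infinite one reduces to the algebraic closure since the orbit question is geometric, or one notes rank is preserved under field extension.)

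To exclude rank three: if $q$ had rank three we could assume $q = x^2 + y^2 + z^2$ (or, in characteristic $\neq 2$, after a linear change, $q = xy + z^2$ followed by completing squares — but cleanly, $q = x^2+y^2+z^2$ up to $\Pgl(3)$ and scalars). Then $I \supseteq (q)$ and $A = R/I$ has $T_2 = 5$. The key step is to show this is incompatible with the required shape of $\Delta T$, specifically with $\delta_i(T) = 2$ for $i = 2, \dots, t$ with $t \geq 3$ — equivalently with $T_3 = 7$. One route: the ideal generated by a single nondegenerate (rank $3$) conic $q$ has $R/(q)$ a hypersurface ring with Hilbert function $(1,3,5,7,\ldots)$, and for $I \supsetneq (q)$ to still have $T_3 = 7$ we would need $I_3 = q \cdot R_1$ exactly (three-dimensional), i.e. $I$ has no new generators in degree $3$. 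But then by the structure of Gorenstein ideals in codimension three — or more elementarily, by checking that $\Ann(F)$ for the dual generator $F$ of socle degree $j$ must contain $q$ — one derives a contradiction with the Buchsbaum–Eisenbud Pfaffian structure: an AG ideal of codimension three containing a single smooth conic in degree two, with the next generators appearing only in higher degree, cannot produce the symmetric SI-sequence \eqref{specialeqn} with the prescribed initial run of $\delta_i = 2$. The cleanest phrasing is via Macaulay inverse systems: $q \circ F = 0$ means $F$ lies in the kernel of the differential operator $q$; when $q$ is nondegenerate this kernel, graded piece by graded piece, has dimension $r_i - r_{i-2}$ in $S_i$, which matches $(1,3,5,7,9,\ldots)$, and one checks that any $F$ in degree $j$ annihilated by such $q$ forces $T_2(A) = 5$ but $T_3(A) \le 7$ with equality only in the "generic" situation — then one must rule out that this generic situation gives an SI-sequence of the form \eqref{specialeqn} with $t \ge 3$, using that $\delta_i(T)$ for the O-sequence $\Delta T$ in codimension two must satisfy \eqref{DT2eq}: a rank-three $q$ in degree two is consistent only with $\delta_i(T) = (1,2,2,\ldots)$ continuing, which would eventually violate the Macaulay growth bound unless it is exactly the sequence $(1,3,5,7,9,\ldots)$ — not our case, since \eqref{specialeqn} has $\delta_i$ dropping to $1$'s.

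The main obstacle I expect is making the last implication rigorous and characteristic-robust: showing that a rank-three $q \in I_2$ pins down enough of the ideal in degrees $2$ and $3$ to contradict the SI-sequence shape for $t \ge 3$, while only assuming ${\sf k}$ infinite of characteristic $\geq 3$ (the hypothesis $p \geq 3$ is exactly what makes "rank" and diagonalization of quadrics behave well; $p = 2$ would break the classification of conics). A secondary technical point is the passage between the orbit statement over $\bar{\sf k}$ and over an infinite ${\sf k}$, which I would handle by noting that $I_2$ is defined over ${\sf k}$, its $\Pgl(3,\bar{\sf k})$-orbit meets the ${\sf k}$-points in one of the two named normal forms (both of which are ${\sf k}$-rational), and a conic of rank $1$ or $2$ over $\bar{\sf k}$ that is defined over ${\sf k}$ is $\Pgl(3,{\sf k})$-equivalent to $z^2$ or $xy$ respectively when ${\sf k}$ is infinite. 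If instead the intended argument is shorter — e.g. simply that $\dim I_2 = 1$ plus the fact that the codimension-three Pfaffian resolution forces the single degree-two generator to be a zerodivisor hence of rank $\le 2$ — then the proof collapses to: compute $\dim I_2 = 1$, invoke that a nonzero AG codimension-three ideal with exactly one quadric generator has that quadric non-regular (a regular sequence of length one extends, but the Pfaffian structure with $t \ge 3$ more $\delta_i = 2$ terms forbids $q$ from being part of a regular sequence of length two, which rank $\le 2$ encodes), and conclude $q \sim xy$ or $q \sim z^2$.
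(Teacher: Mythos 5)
Your setup is correct and matches the paper's: since $T_2=5$ we have $\dim_{\sf k}I_2=6-5=1$, so $I_2=\langle f\rangle$ for a single conic $f$, and over $\overline{\sf k}$ with $\cha {\sf k}\neq 2$ the three orbits are rank $1$ ($z^2$), rank $2$ ($xy$), and rank $3$; the entire content of the lemma is excluding the rank‑three (irreducible) case. That exclusion, however, is never actually carried out, and both mechanisms you offer in its place are incorrect. First, a rank‑three conic in $I_2$ is \emph{not} ``consistent only with $\delta_i(T)=(1,2,2,\ldots)$ continuing'': the sequence $(1,3,5,6^k,5,3,1)$ of Theorem \ref{thm1,3,5,6} is realized by ideals whose degree‑two generator is irreducible (Case (ii) of that proof, e.g.\ complete intersections of type $(2,3,j-2)$), so $\delta_i$ can perfectly well drop to $1$'s immediately after an irreducible conic; the dichotomy cannot be read off from $I_2$ alone. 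Second, ``rank $\le 2$'' does not encode ``not part of a regular sequence of length two'': in $\k[x,y,z]$ every nonzero quadric of any rank extends to a homogeneous system of parameters, hence to a regular sequence of length three, so that version of the argument is vacuous. The remaining appeal to ``a contradiction with the Buchsbaum--Eisenbud Pfaffian structure'' is an assertion of exactly what must be proved, with no argument supplied.

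The step you are missing is a concrete Hilbert‑function comparison that uses $t\ge 3$ and the precise shape of \eqref{specialeqn}. Since $T_i=2i+1$ for $i\le t$, one has $\dim_{\sf k}I_i=\binom{i}{2}=\dim_{\sf k}(f)_i$ for $3\le i\le t$, so $I$ has no minimal generator besides $f$ in degrees $3,\dots,t$; and $T_{t+1}=2t+2<2t+3$ forces a new minimal generator $g$ in degree exactly $t+1$. If $f$ is irreducible (over $\overline{\sf k}$), then $f,g$ is automatically a regular sequence, and the complete intersection $R/(f,g)$ of type $(2,t+1)$ has Hilbert function equal to $2t+2$ in every degree $\ge t+1$. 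Since $(f,g)\subseteq I$, this forces $T_{t+2}\le 2t+2$, contradicting $T_{t+2}=2t+3$ from \eqref{specialeqn}. Note that this is precisely where irreducibility enters: if $f=\ell_1\ell_2$ is reducible, $g$ may share a linear factor with $f$, $(f,g)$ need not be a complete intersection, and no contradiction arises --- which is why the reducible normal forms $xy$ and $z^2$ survive.
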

\begin{proof}
 Assume by way of contradiction that $I\cong (f,g,\ldots)$, with $f\in R_2$ irreducible, and $g$ the next minimal generator, of degree $a\geq 4$. Since $f$ is irreducible, the ideal $(f,g)$ is a complete intersection of  generator degrees $(2,a)$ and its Hilbert function $T=T(A)$ begins $T=(1,3,5, 7,\ldots, 2a-1,\underline{2a})$. This yields $a=t+1$ and a componentwise inequality 
 \[
 T=T(A)\le T(R/(f,g))=(1,3,5,\ldots, 2t+1,2t+2,2t+2,\ldots),
 \] contradicting the equation \eqref{specialeqn}. 
  \end{proof}
     
We say a symmetric unimodal sequence $T$ is a \emph{strong Lefschetz sequence} if there is at least one standard-graded Artinian Gorenstein algebra having Hilbert function $T$. The second author showed the following.
\begin{theorem}\label{althm}\cite[Theorem 3.2]{Al} Let $R={\sf k}[x_1,\ldots, x_r]$ and suppose $\cha {\sf k}=0$.\begin{enumerate}[(i).]
\item Every AG algebra quotient having socle degree $j\ge 2\tau(\Z)$ of the coordinate ring $A(\Z)$ of a smooth projective scheme $\Z$ in $\mathbb P^{r-1}$, is strong Lefschetz.
\item A Gorenstein sequence $T=H(R/I) $ is a strong Lefschetz sequence if and only if it is an SI-sequence.
\end{enumerate}
\end{theorem}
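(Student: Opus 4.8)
The plan is to prove (i) by Macaulay duality together with the Maeno--Watanabe Hessian criterion (Theorem~\ref{thm:hessian}), and then to derive (ii) from (i) plus the classical realizability of O-sequences as $h$-vectors of reduced point sets. We may take $\Z$ to be finite (and reduced, since ``smooth'' means reduced for a $0$-dimensional scheme) --- this is the case used in our applications, with $\tau(\Z)$ the degree where the Hilbert function $H_\Z$ of $A(\Z)$ stabilizes; for positive-dimensional smooth $\Z$ one first passes to a section by $\dim\Z$ general hyperplanes (Bertini, and $\tau$ does not increase under a general hyperplane section). So let $\Z=\{p_1,\dots,p_s\}\subset\mathbb{P}^{r-1}$ be reduced, let $L_k\in S_1$ be the divided-power linear form dual to $p_k$, and set $\tau=\tau(\Z)$. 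If $A=R/\Ann(F)$ is an AG quotient of $A(\Z)$ of socle degree $j\ge 2\tau$, then $\Ann(F)\supseteq I(\Z)$ means $F$ lies in the inverse system of $I(\Z)$, so
\[
F=\sum_{k=1}^{s}c_k\,L_k^{[j]}\qquad\text{with all }c_k\neq 0
\]
(after discarding any $p_k$ with $c_k=0$, which only shrinks $\Z$). Since $\cha\k=0$, by Theorem~\ref{thm:hessian} it suffices to prove that for each $i\in[0,\lfloor j/2\rfloor]$ the polynomial $\ell\mapsto\det\Hess^i(F)(\ell)$ is not identically zero; a general $\ell\in A_1$ then avoids these finitely many hypersurfaces and is an SL element.

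The core is the following computation. Contracting a divided power against a form evaluates that form at the corresponding point, so for a $\k$-basis $\{\alpha^{(i)}_1,\dots,\alpha^{(i)}_{d_i}\}$ of $A_i$ one finds $\bigl(\alpha^{(i)}_u\alpha^{(i)}_v\circ F\bigr)(\ell)=\tfrac{1}{(j-2i)!}\sum_{k}c_k\,\mu_k(\ell)^{\,j-2i}\,\alpha^{(i)}_u(p_k)\,\alpha^{(i)}_v(p_k)$, where $\mu_k(\ell)$ is the value at $\ell$ of the linear form dual to $p_k$ (nonzero for general $\ell$). Hence $\Hess^i(F)(\ell)=\tfrac{1}{(j-2i)!}\,V_i^{\top}D_i(\ell)\,V_i$ with $V_i=(\alpha^{(i)}_u(p_k))_{k,u}$ the $s\times d_i$ matrix of the evaluation map $A_i\to\bigoplus_{k}\k$ and $D_i(\ell)=\mathrm{diag}\bigl(c_k\mu_k(\ell)^{j-2i}\bigr)$, so by Cauchy--Binet
\[
\det\Hess^i(F)(\ell)=\frac{1}{\bigl((j-2i)!\bigr)^{d_i}}\sum_{\substack{K\subseteq\{1,\dots,s\}\\ |K|=d_i}}\bigl(\det V_i[K]\bigr)^2\prod_{k\in K}c_k\,\mu_k(\ell)^{j-2i}.
\]
For $i\le\lfloor j/2\rfloor$ we have $j-i\ge\tau$ (as $\tau\le j/2$), so the forms $L_1^{[j-i]},\dots,L_s^{[j-i]}$ are linearly independent; thus a degree-$i$ form kills $F$ iff it vanishes at all $p_k$, i.e.\ $\Ann(F)_i=I(\Z)_i$ and $A_i=A(\Z)_i$, and because $\Z$ is reduced the evaluation map $A(\Z)_i\hookrightarrow\bigoplus_k\k$ is injective. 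Hence $V_i$ has full column rank, some $\det V_i[K]\neq 0$, and since the products $\prod_{k\in K}\mu_k(\ell)^{j-2i}$ for distinct $K$ are distinct, the sum above is a nonzero polynomial in $\ell$. This holds for all $i\in[0,\lfloor j/2\rfloor]$ (at $i=0$ it reads $F(\ell)\neq0$; at $i=\lfloor j/2\rfloor$ with $j$ even, $V_i^{\top}\mathrm{diag}(c_k)V_i$ is the nonsingular Gorenstein Gram matrix), which proves (i).

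For (ii), the implication ``SL sequence $\Rightarrow$ SI'' is easy: if $(A,\ell)$ is SL of socle degree $j$, then $\ell\colon A_{m-1}\to A_m$ is injective for $m\le\lceil j/2\rceil$ (it is a factor of the bijection $\ell^{j-2(m-1)}\colon A_{m-1}\to A_{j-m+1}$), so $A/\ell A$ is an Artinian algebra with $\dim(A/\ell A)_m=\delta_m(T)$ for $m\le\lfloor j/2\rfloor$; since Hilbert functions of Artinian algebras and their truncations are O-sequences, $\Delta T=(1,\delta_1(T),\dots,\delta_{\lfloor j/2\rfloor}(T))$ is an O-sequence, i.e.\ $T$ is SI. Conversely, let $T$ be an SI Gorenstein sequence of codimension $r$, so $\Delta T$ is an O-sequence with $\delta_1(T)=r-1$. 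By the classical realizability of O-sequences as $h$-vectors of reduced point sets, choose a reduced non-degenerate $\Z\subset\mathbb{P}^{r-1}$ whose $h$-vector is $\Delta T$; then $H_\Z$ agrees with $T$ on $[0,\lfloor j/2\rfloor]$ and is constant $=T_{\lfloor j/2\rfloor}=s$ afterward, so $\tau(\Z)\le\lfloor j/2\rfloor$ and $j\ge 2\tau(\Z)$. For any nonzero $c_k$ the form $F=\sum_k c_kL_k^{[j]}$ has $\Ann(F)\supseteq I(\Z)$, and by the same linear independence of the $L_k^{[j-m]}$ used above together with Gorenstein symmetry, $A:=R/\Ann(F)$ has $H(A)_m=\min\bigl(H_\Z(m),H_\Z(j-m)\bigr)=T_m$. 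Thus $A$ is an AG quotient of $A(\Z)$ of socle degree $j\ge 2\tau(\Z)$ with Hilbert function $T$, so by part (i) it is strong Lefschetz and $T$ is an SL sequence.

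The main obstacle is the non-vanishing of the Hessian determinants in (i). Once $F$ is recognized as a sum of $j$-th powers of linear forms, the Gram-matrix identity $\Hess^i(F)(\ell)=\tfrac{1}{(j-2i)!}V_i^{\top}D_i(\ell)V_i$ and Cauchy--Binet reduce everything to the full-column-rankness of the evaluation matrices $V_i$, and smoothness (reducedness) of $\Z$ enters exactly here --- to diagonalize the bilinear forms $\alpha\beta\circ F$ over the closed points $p_k$ and to make $A(\Z)_i\hookrightarrow\bigoplus_k\k$ injective. This is also the step that blocks extension to $\cha\k=p>j$, since the Hessian criterion (Theorem~\ref{thm:hessian}) is then unavailable. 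A lesser point is the positive-dimensional reduction, which needs only Bertini and the behaviour of regularity under general hyperplane sections; and in (ii) the non-elementary input beyond (i) is the fact that every O-sequence of codimension $r-1$ is the $h$-vector of a reduced non-degenerate set of points in $\mathbb{P}^{r-1}$.
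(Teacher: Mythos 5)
You should note at the outset that the paper does not actually prove Theorem~\ref{althm}: it is quoted from \cite{Al}, and the text only records that the proof of (i) uses the Hessian criterion and that (ii) uses Maroscia's realizability result. Your strategy --- write $F=\sum_k c_kL_k^{[j]}$ over a reduced annihilating set of points, factor $\Hess^i(F)(\ell)$ as $V_i^{\top}D_i(\ell)V_i$, apply Cauchy--Binet, and deduce (ii) from (i) together with the realization of any O-sequence as the $h$-vector of a reduced point set --- is exactly the route of the cited proof, and most of your bookkeeping (that $A_i=A(\Z)_i$ when $j-i\ge\tau$, the full column rank of $V_i$, the ``only if'' direction of (ii), and the computation of $H(A)$ in the converse direction, which indeed works for all nonzero $c_k$) is correct.

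The gap is at the one step that carries the entire weight of part (i). From
\begin{equation*}
\det\Hess^i(F)(\ell)\;=\;\frac{1}{((j-2i)!)^{d_i}}\sum_{|K|=d_i}\bigl(\det V_i[K]\bigr)^2\prod_{k\in K}c_k\,\mu_k(\ell)^{j-2i}
\end{equation*}
you conclude non-vanishing ``since the products $\prod_{k\in K}\mu_k(\ell)^{j-2i}$ for distinct $K$ are distinct.'' Distinctness does not give linear independence: the $\binom{s}{d_i}$ forms $\bigl(\prod_{k\in K}\mu_k\bigr)^{j-2i}$ all lie in $S_{d_i(j-2i)}$, whose dimension is easily exceeded by $\binom{s}{d_i}$ (already for $r=3$, $d_i=3$ and $s$ of order $\tau^2$), and the coefficients $(\det V_i[K])^2\prod_{k\in K}c_k$ carry no sign coherence over an arbitrary field of characteristic zero, so no positivity argument rescues the sum. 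Note that by Theorem~\ref{thm:hessian} the non-vanishing of $\det\Hess^i(F)$ is \emph{equivalent} to the bijectivity of $\ell^{j-2i}$ for some $\ell$, and for $i\ge\tau$ Lemma~\ref{tightlemma} already supplies such an $\ell$; so this step is precisely the nontrivial content of the theorem for $i<\tau$. What your computation does establish is weaker: treating $c_1,\dots,c_s$ as indeterminates, the monomials $\prod_{k\in K}c_k$ separate the Cauchy--Binet terms, so $\det\Hess^i(F)\not\equiv0$ for \emph{generic} $(c,\ell)$, i.e.\ a \emph{general} socle-degree-$j$ AG quotient of $A(\Z)$ is SL. That suffices for part (ii), which only needs one SL algebra per Hilbert function, and for the sole use this paper makes of the theorem (Proposition~\ref{gencompprop}), but it does not prove the ``every quotient'' assertion of part (i); a further argument is needed for fixed nonzero $c_k$. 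A lesser point: your reduction of positive-dimensional smooth $\Z$ to points via general hyperplane sections does not work as stated, since the given dual generator $F$ need not lie in the inverse system of the section; the honest reduction (over $\bar{\k}$, $F$ is a finite combination of powers of linear forms dual to points of $\Z$) lands on a finite subset $\W\subset\Z$ for which $j\ge2\tau(\W)$ is not automatic. This is harmless here, where $\Z$ is always punctual.
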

Her method of proof of (i) uses the Hessian criterion; the proof of (ii) uses a result of P. Maroscia that $\Hilb^H(\mathbb P^{r-1})$ where $\Delta(H)$ is an O-sequence contains a smooth scheme (see \cite[Thm 5.21]{IK}.  L.J. Billera and C. W. Lee \cite{BiL} and R. Stanley \cite{St2} showed in 1980 that weak Lefschetz Gorenstein sequences are SI when $\cha {\sf k}$ is zero. T. Harima generalized this weak Lefschetz statement to a field of arbitrary characteristic using an algebraic method - linkage - and a technique of J. Watanabe \cite[proof of Theorem 3.8]{Wa2} in 1995 \cite{Ha1}.\footnote{Part (ii) of Theorem \ref{althm} also follows from \cite{BiL} and \cite{St2}: the former shows the $g$-Theorem that an SI sequence is the $h$-vector of a simplicial polytope $P$;  the latter shows that $P$ defines an algebraic variety $X_P$ whose cohomology ring is strong Lefschetz and has Hilbert function $h$. \par Corollary 4.6 of \cite{HMNW} shows that over an infinite field
the set of unimodal Hilbert functions $T$ possible for weak Lefschetz Artinian algebras and the set possible for strong Lefschetz Artinian algebras are the same:  they are those such that the positive part of the first differences $\Delta T$ is an O-sequence \cite[Proposition~3.5]{HMNW}. This is different than identifying the sets of Hilbert functions that are possible for weak Lefschetz AG algebras and for strong Lefschetz AG algebras using Theorem \ref{althm} and the earlier results. The characterization of strong Lefschetz Gorenstein sequences remains open in characteristic $p$.}

\section{Lefschetz properties}

Recall that all Artinian algebras $A$ of codimension two are strong Lefschetz provided the characteristic of the base field is zero or larger than the socle degree of $A$.

For graded Gorenstein algebras of codimension three, some important cases of weak Lefschetz or strong Lefschetz are already known; the most relevant for this paper are contained in 
\cite{HMNW,BMMNZ}. Several have informally conjectured even that all graded codimension three Gorenstein algebras might have the strong Lefschetz property, a conjecture we regard as extraordinarily ambitious, given that perhaps the strongest result - that codimension three graded complete intersections are weak Lefschetz over $\cha {\sf k}=0$ uses the Grauert-M\"{u}lich theorem for vector bundles on $\mathbb P^2$ \cite{HMNW}.
 The main pertinent results known before 2017 can be summarized as follows:
 
\begin{proposition}
\label{WLlem} 
   \begin{enumerate}[i.]
   \item If $\cha{\sf k}=0$,  all codimension three complete intersections are weak Lefschetz (WL)  (\cite{HMNW}, see also \cite[Theorem 3.48, Lemma~3.49]{H-W} for an exposition)
   \item If $\cha{\sf k}=0$,  all codimension three AG algebras of socle degrees $2e-1$ and $2e$ over an infinite field $\sf k$ are WL if and only if all compressed codimension three AG algebras of odd socle degree $2e-1$ are WL  \cite[Corollary 2.5]{BMMNZ}.
   \item If $\cha {\sf k}=0$ or $\cha {\sf k}=p>3$ and $\sf k$ is an infinite field then all codimension three compressed AG algebras of socle degree $3$ and $5$ are WL (for $j=5$ see \cite[Theorem~3.8]{BMMNZ} (where $H=(1,3,6,6,3,1))$.\footnote{They also show that when $j=5$ and $\cha \sf k=3$ then the only non-WL exception is when $I= (x^2y,x^2z,y^3,z^3, x^4 + y^2z^2)$, after a change of variables. The case $j=3$ was known in $\cha {\sf k}=0$ and is worked out below when $\cha {\sf k}=p$ (Lemma \ref{13k1SLlem}).}  \footnote{A. Vraciu writes in her MathSciNet review of \cite{BMMNZ}: ``The proof involves subtle geometric properties of certain classical configurations of known points and Hesse configurations [in $\mathbb P^2$ ].''} 
   \item  If $\cha \sf k=0$, all codimension three AG algebras of socle degree  $j\leq 5$ are  SL  \cite[Corollary 3.13]{BMMNZ}.\footnote{The characteristic $\sf k=0$ assumption comes from use of the Gordan-Noether theorem to show the map $\ell^{j-2}: A_1\to A_{j-1}$ has rank 3.}
\item If $\cha{\sf k}=0$, all codimension three algebras defined by powers of linear forms are WL \cite{ScSe}.
    
     \end{enumerate}
\end{proposition}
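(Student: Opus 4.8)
The plan is to observe that Proposition \ref{WLlem} is a compendium of results established in the cited literature, so the proof consists of pointing to the appropriate reference for each of the five items and, where helpful, recalling the mechanism that makes it work. I would go through the items in a convenient order rather than the listed one.

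For item (i), that all codimension three complete intersections $R/(f_1,f_2,f_3)$ are weak Lefschetz when $\cha {\sf k}=0$, I would follow \cite{HMNW}: after reducing to a general linear form $\ell$, the claim becomes a statement about the ranks of the multiplications $\ell:A_i\to A_{i+1}$, which are governed by the restriction to a general line $L\subset\mathbb{P}^2$ of the syzygy sheaf of the regular sequence $(f_1,f_2,f_3)$. The Grauert--M\"ulich theorem forces that restriction to be as balanced as possible, and balancedness translates into maximal rank for the multiplication maps. For item (v), the analogous statement for algebras defined by powers of linear forms in characteristic zero, I would simply invoke \cite{ScSe}.

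For the remaining items I would rely on \cite{BMMNZ}. Item (ii) is a reduction argument: using the symmetry of the Gorenstein Hilbert function and an analysis of how the weak Lefschetz condition propagates under passing to $I\colon x$ (which is again a Gorenstein ideal, cf. Remark \ref{rem:B}), one shows the only genuinely open case is that of compressed Gorenstein algebras of odd socle degree; this is \cite[Corollary 2.5]{BMMNZ}. Item (iii) handles the first two such cases, socle degrees $3$ and $5$ (with compressed Hilbert function $(1,3,6,6,3,1)$ when $j=5$), by a direct geometric study of the classical point configurations in $\mathbb{P}^2$ that arise as annihilating schemes; this is \cite[Theorem 3.8]{BMMNZ}, and it is also where the characteristic $3$ exception $I=(x^2y,x^2z,y^3,z^3,x^4+y^2z^2)$ is isolated. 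Item (iv), strong Lefschetz for $j\le 5$, combines item (iii) with the Gordan--Noether Theorem \ref{GNlem}: once the $\ell$-multiplications have maximal rank, the maps $\ell^{j-2i}:A_i\to A_{j-i}$ for $i\ge 2$ or $i=0$ are isomorphisms by dimension count, while $\ell^{j-2}:A_1\to A_{j-1}$ has rank $3$ because $\det\Hess(F)\not\equiv 0$ for a non-cone $F$ in three variables, by Theorem \ref{GNlem} and Theorem \ref{thm:hessian}; this is \cite[Corollary 3.13]{BMMNZ}.

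The main obstacle -- and the step I would not expect to reproduce quickly -- is the geometry behind item (iii). Establishing weak Lefschetz for the compressed cases requires understanding precisely which length-$s$ subschemes of $\mathbb{P}^2$ occur as tight annihilating schemes and ruling out the pathological configurations (Hesse configurations and their degenerations) that would obstruct maximal rank; this is exactly the ``subtle geometric properties of certain classical configurations'' alluded to in the literature, and it is what produces the lone characteristic $3$ counterexample. Everything else is either a formal reduction, as in (ii), a direct citation, as in (i) and (v), or a short dimension count together with an appeal to Gordan--Noether, as in (iv).
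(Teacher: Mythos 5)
Your proposal is correct and takes essentially the same route as the paper: Proposition \ref{WLlem} is stated there purely as a survey of known results with inline citations and no further proof, and your item-by-item attribution (Grauert--M\"ulich via \cite{HMNW} for (i), the reduction and compressed cases of \cite{BMMNZ} for (ii)--(iv) with Gordan--Noether supplying the rank of $\ell^{j-2}:A_1\to A_{j-1}$ in (iv), and \cite{ScSe} for (v)) matches the paper's intent exactly.
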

In addition, as a consequence of Theorem \ref{althm} we have
\begin{proposition}\label{gencompprop} Let $T$ be a codimension three Gorenstein sequence and assume $\cha {\sf k}=0$. Then an open dense subset of $\Gor(T)$ is comprised of strong Lefschetz algebras.
\end{proposition}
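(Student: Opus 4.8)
The plan is to show that the \emph{strong Lefschetz locus}
\[
\Gor_{SL}(T)=\{F\in\Gor(T)\ :\ A_F:=R/\Ann F\text{ is strong Lefschetz}\}
\]
is open and nonempty in $\Gor(T)$. Since $\Gor(T)$ is a smooth \emph{irreducible} variety --- a consequence of the Buchsbaum--Eisenbud Pfaffian structure theorem recalled in the introduction --- a nonempty open subset of it is automatically dense, which is exactly the assertion.

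\textbf{Openness.} I would invoke the Hessian criterion, Theorem~\ref{thm:hessian}. Regard $\Gor(T)$ as a locally closed subvariety of $\mathbb{P}(S_j)$, with $j$ the socle degree, so that a point is a form $F$ well defined up to a nonzero scalar; identify the space $R_1$ of linear forms with $\mathbb{A}^3$ via $a=(a_1,a_2,a_3)\mapsto \ell_a=a_1x+a_2y+a_3z$. On a chart around a point $F_0$ one may fix monomials $\mu^{(i)}_1,\dots,\mu^{(i)}_{T_i}\in R_i$ whose images form a $\k$-basis of $A_{F,i}$ for every $F$ in the chart: indeed $\dim_\k A_{F,i}=T_i$ is constant on $\Gor(T)$, so linear independence of the contractions $\mu^{(i)}_k\circ F$ in $S_{j-i}$, which holds at $F_0$, persists on a Zariski neighborhood. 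With respect to this basis the entry $(\Hess^i(F))_{u,v}=\mu^{(i)}_u\mu^{(i)}_v\circ F$ depends regularly --- indeed linearly --- on $F$, so $D_i(F,a):=(\det\Hess^i(F))(a)$ is a regular function of $F$ on the chart and of $a\in\mathbb{A}^3$; moving to another chart multiplies $D_i$ only by a nowhere-vanishing factor (basis change together with rescaling of $F$), so the common zero locus of $D_0,\dots,D_{\lfloor j/2\rfloor}$ is intrinsic. By Theorem~\ref{thm:hessian} its complement
\[
U=\{(F,a)\in\Gor(T)\times\mathbb{A}^3\ :\ D_i(F,a)\neq 0\text{ for }i=0,\dots,\lfloor j/2\rfloor\}
\]
is exactly the set of pairs for which $\ell_a$ is a strong Lefschetz element of $A_F$, and it is open. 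Now $A_F$ is strong Lefschetz iff the fiber $U_F$ is nonempty, i.e. iff $F$ lies in the image of $U$ under the projection $p\colon\Gor(T)\times\mathbb{A}^3\to\Gor(T)$; since $p$ is smooth, hence an open morphism, $\Gor_{SL}(T)=p(U)$ is open in $\Gor(T)$, and by construction every algebra it parametrizes is strong Lefschetz. (As $\k$ is infinite --- recall $\cha \k=0$ --- nonemptiness of $U_F$ moreover furnishes a $\k$-rational strong Lefschetz element.)

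\textbf{Nonemptiness and conclusion.} Since $T$ is the Hilbert function of a standard-graded codimension three Artinian Gorenstein algebra, Lemma~\ref{cod3SIem} shows $T$ is an SI-sequence, and then Theorem~\ref{althm}(ii) shows $T$ is a strong Lefschetz sequence: some standard-graded Artinian Gorenstein algebra of Hilbert function $T$ is strong Lefschetz. Its parameter lies in $\Gor_{SL}(T)$, so $\Gor_{SL}(T)\neq\emptyset$; being a nonempty open subset of the irreducible variety $\Gor(T)$, it is dense, which proves the proposition. The step that requires genuine care is the openness bookkeeping above --- that the Hessian determinants, which a priori depend on the chosen bases, glue to a well-defined non-vanishing locus on $\Gor(T)\times\mathbb{A}^3$ (using constancy of $\dim_\k A_{F,i}$ over $\Gor(T)$ and basis-independence of $\det\Hess^i$ up to a unit), together with the appeal to openness of the smooth projection $p$; everything else is immediate from the quoted results.
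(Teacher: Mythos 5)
Your proof is correct and follows essentially the same route as the paper: Stanley's characterization (Lemma \ref{cod3SIem}) plus Theorem \ref{althm} to produce one strong Lefschetz algebra of Hilbert function $T$, irreducibility of $\Gor(T)$, and openness of the strong Lefschetz condition. The paper merely asserts openness via maximal rank of the multiplication matrices, whereas you spell it out through the Hessian criterion and an open projection; this is a more detailed rendering of the same step, not a different argument.
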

\begin{proof} By Lemma \ref{cod3SIem} $T$ is an SI sequence. By Theorem \ref{althm} there is a strong Lefschetz AG algebra of Hilbert function $T$. Since in codimension three $\Gor(T)$ is irreducible \cite{Di}, and since being strong Lefschetz is an open condition on a family of algebras having constant Hilbert function - it corresponds to maximal rank of the matrices for multiplication by $\ell^{j-2i}: A_i\to A_{j-i}$ - the conclusion follows.
\end{proof}

   In the following we begin a proof of our main result. The proof proceeds by case analysis bases on the possible Hilbert functions listed in   tables \eqref{table1} and \eqref{table2}.
   
   We will assume throughout that ${\sf k}$ is an infinite field of characteristic zero, or characterstic $p> j$, where $j$ is the socle degree of the AG algebras being considered. We discuss $\cha {\sf k}=p>j$ explicitly  in Section \ref{charpsec}.
 Unless otherwise specified we take $R={\sf k}[x,y,z]$ and  $S={\sf k}_{DP}[X,Y,Z]$.

   \subsection{The case $T=(1,3,s^k,3,1), 3\le s\le 6.$}
   
  We start by analyzing the codimension three AG algebras having least values of the invariant  
  $
  \tau=\min\{i\mid T(A)_i=s\},
  $ where $s$ is the Sperner number of the AG algebra $A$.
  The corresponding Hilbert functions are given in tables \eqref{table1} and \eqref{table2}.
  
   \begin{theorem}
   \label{SLthm} 
   If $A\in \Gor(T), T=(1,3,s^k,3,1)$ with $3\le s\le 6$ and $\cha {\sf k}=0$ then $A$ is SL. When $\sf k$ is an infinite field of characteristic greater than $j=k+3$, then $A$ is almost SL.
   \end{theorem}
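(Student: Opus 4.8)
The plan is to show that for a general linear form $\ell\in A_1$ every multiplication map $\ell^{j-2i}: A_i\to A_{j-i}$ with $0\le i\le\lfloor j/2\rfloor$ is bijective, where $j=k+3$ is the socle degree; when $\cha \k=p>j$ we aim for the same conclusion for all such $i$ except possibly $i=1$. Since $T_0=1$, $T_1=3$ and $T_i=s$ for $2\le i\le k+1$, and since $\lfloor j/2\rfloor\le k+1$ for every $k\ge1$, the maps fall into three groups: $i=0$, $i=1$, and $2\le i\le\lfloor j/2\rfloor$, the last group lying entirely inside the constant middle stretch of $T$. Note that when $k=1$ the only index with $i\ge2$ is $i=\lfloor j/2\rfloor=2$, where $\ell^{j-2i}=\ell^0$ is the identity, so that group requires nothing.

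For $i=0$, the map $\ell^{j}: A_0\to A_j$ is multiplication by the scalar $\ell^{j}\in A_j\cong\k$; since in characteristic $0$ or $p>j$ the $j$-th powers of linear forms span $R_j$, they span the one-dimensional quotient $A_j$, so $\ell^{j}\neq0$ for $\ell$ outside a proper closed subset of $A_1$. For $i=1$, the spaces $A_1$ and $A_{j-1}$ are three-dimensional; because $T_1=3=\dim_\k R_1$ we have $I_1=0$, hence $I\subseteq\m^2$ and $F$ is annihilated by no linear form, i.e.\ $F$ is not a cone. Working in three variables, the Gordan--Noether theorem (Theorem \ref{GNlem}) then gives $\det\Hess^1(F)=\det\Hess(F)\not\equiv0$, and the Hessian criterion (Theorem \ref{thm:hessian}) shows that $\ell^{j-2}: A_1\to A_{j-1}$ is an isomorphism for general $\ell$. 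This is the single map we cannot control when $\cha \k=p>j$, since the Gordan--Noether route is unavailable there; this is precisely what forces the word ``almost''.

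For the middle group $2\le i\le\lfloor j/2\rfloor$, assume first that $k\ge3$ (which is automatic when $s=3$, since then the flanking $3$'s extend the constant run, so $(3,3,3)\subseteq T$ for every $k\ge1$). Then $T$ contains the block $(s,s,s)$ and $\tau=\min\{i\mid T_i=s\}$ equals $2$ if $s\ge4$ and $1$ if $s=3$; in either case $\tau\le2$. By Theorem \ref{pithm} the dual generator $F$ has a unique tight annihilating scheme $\Z\subset\mathbb P^2$ of length $s$, and Lemma \ref{tightlemma} supplies a linear form that is a nonzerodivisor on the homogeneous coordinate ring $B$ of $\Z$ and for which $\ell^{j-2i}: A_i\to A_{j-i}$ is an isomorphism for all $\tau\le i\le j/2$, hence for all $2\le i\le\lfloor j/2\rfloor$; being an isomorphism is an open condition on $\ell$, so a general $\ell$ works, and because $\sf k$ is infinite this argument is valid both in characteristic zero and in characteristic $p>j$. (In particular, when $s=3$ one has $\tau=1$, so the same argument also handles $i=1$ and yields full SL even in characteristic $p>j$.) The only remaining configuration is $k=2$, where $j=5$ and the sole outstanding index is $i=2$, the map $\ell: A_2\to A_3$: in characteristic zero this is covered by Proposition \ref{WLlem}(iv), while for $\cha \k=p>5$ one verifies maximal rank of $\ell: A_2\to A_3$ directly --- through the compressed case Proposition \ref{WLlem}(iii) when $s=6$, and by exploiting the degree-two generator of $I$ when $s\in\{4,5\}$.

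Combining the three groups gives the strong Lefschetz property when $\cha \k=0$, and the almost strong Lefschetz property --- all maps $\ell^{j-2i}$ with $i=0$ or $2\le i\le\lfloor j/2\rfloor$ are isomorphisms --- when $\sf k$ is infinite of characteristic $p>j$. The principal obstacle is the positive-characteristic behaviour of $\ell^{j-2}: A_1\to A_{j-1}$: without Gordan--Noether there is no Hessian argument forcing its rank to be $3$ (except when $s=3$, where Lemma \ref{tightlemma} covers it), which is why in general only ``almost'' SL can be asserted; a secondary technical point is the hands-on check, in the socle-degree-five cases $k=2$ with $s\ge4$, that $\ell: A_2\to A_3$ has maximal rank when $\cha \k=p>5$. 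Everything else is bookkeeping on top of Theorem \ref{pithm}, Lemma \ref{tightlemma}, and the spanning of $R_j$ by $j$-th powers of linear forms.
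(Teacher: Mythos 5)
Your proposal follows the paper's proof essentially verbatim: Gordan--Noether plus the Hessian criterion to handle $\ell^{j-2}:A_1\to A_{j-1}$ in characteristic zero (this being the sole obstruction in characteristic $p>j$, whence ``almost'' SL), and the tight annihilating scheme from Theorem \ref{pithm} together with Lemma \ref{tightlemma} for the maps with $2\le i\le j/2$ when $k\ge 3$, with $k\le 2$ treated separately. The only divergence is the $k=2$ subcase: the paper deduces that $\ell:A_2\to A_3$ is bijective from the weak Lefschetz property via Proposition \ref{WLlem}(ii),(iii) (the reduction to the compressed socle-degree-five case), whereas your positive-characteristic claim for $s\in\{4,5\}$ --- ``exploiting the degree-two generator of $I$'' --- is asserted rather than carried out; you should instead invoke that same reduction, which is characteristic-free.
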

   
\begin{proof}  Assume $T=(1,3,s^k,3,1_j), 3\le s\le 6$ and let $F$ be a Macaulay dual generator for $A$ (see Definition \ref{def:F}).

The map $\ell^{j-2}: A_1\to A_{j-1}$ is an isomorphism for a general enough linear form $\ell\in A_1$, provided the Hessian determinant $\det\Hess(F)$ is not identically zero. This is ensured when $\cha {\sf k}=0$ by the Gordan-Noether theorem (Theorem \ref{GNlem}).

When $k=1$ there is nothing more to show. When $k=2$ the weak Lefschetz property for $A$ follows from  Proposition \ref{WLlem}(ii),(iii) and implies that the map $\ell:A_2\to A_3$ is an isomorphism.

Now, when $k\ge 3$, by Theorem \ref{pithm} $F$ has a (unique) tight annihilating scheme $\Z$  defined by a saturated ideal $J$ (see Definition \ref{def:tightann}), and $I=\Ann(F)$ satisfies  $I_t=J_t, t\in [2,j-2]$ by Theorem \ref{pithm}. Since $R/J$ is saturated, a general linear form $\ell\in R_1$ is a non zero-divisor on $R/J$, so the map $\ell^{j-2i}:A_i\to A_{j-i}$ is an isomorphism for $2\le i\le j/2$ (Lemma \ref{tightlemma}). The Theorem follows.
\end{proof}

\subsection{AG algebras of order two.}
Recall that the order $\nu(A)$ of an Artinian algebra $A$ is $\nu(A)=\min\{i\mid \dim_{\sf k} A_i \not=\dim_{\sf k} R_i\}$. In this section we treat the case of codimension three AG algebras of order two, that is, AG algebras $A$ with $T(A)_1=3$ and $T(A)_2\leq 5$.  We do not restrict $T(A)$ otherwise.

\begin{definition}
We say that an Artinian Gorenstein algebra $B$ has {\em almost constant Hilbert function} if $T(B)=(1, s^k, 1)$ for some integers $s,k>0$.
\end{definition}
\begin{lemma}\label{acSLlem} 
Suppose that an AG algebra $B$ has almost constant Hilbert function $T$ with Sperner number $s\leq 3$ or the multiplicity $k\ge s+1$. Assume additionally that $\cha {\sf k}=0$ or  $\sf k$ is infinite with $ \cha {\sf k}=p>j_B=k+1$. Then $B$ is strong Lefschetz.
\end{lemma}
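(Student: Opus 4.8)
The plan is to reduce the claim for an AG algebra $B$ with almost constant Hilbert function $T(B)=(1,s^k,1)$ to the two separate tasks that Definition \ref{Lefdef} splits into: showing that $\ell^{j_B}\colon B_0\to B_{j_B}$ and $\ell^{j_B-2}\colon B_1\to B_{j_B-1}$ are isomorphisms, and showing that $\ell^{j_B-2i}\colon B_i\to B_{j_B-i}$ is an isomorphism for each $2\le i\le\lfloor j_B/2\rfloor$, for a general linear form $\ell$. The map on $B_0$ is trivial: $B_0\cong B_{j_B}\cong\k$ and $\ell^{j_B}\ne 0$ because $B$ is Gorenstein with one-dimensional socle in degree $j_B$ (equivalently, $\ell^{j_B}$ does not vanish on the socle). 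For the map $\ell^{j_B-2}\colon B_1\to B_{j_B-1}$, I would invoke the Hessian criterion together with the Gordan--Noether theorem: since $B=R/\Ann(F)$ with $R=\k[x,y,z]$ of codimension three $\le 4$, and since $I=\Ann(F)\subset\m^2$ (because $T(B)_1=3$ forces no linear relations), Theorem \ref{GNlem} gives $\det\Hess(F)\not\equiv 0$ unless $F$ is a cone; $F$ is not a cone precisely because $I\subset\m^2$, so $\det\Hess^1(F)$ is a nonzero form and hence nonzero at a general $\ell$. This is the step that needs $\cha\k=0$ (or, in the almost-SL version discussed in Section \ref{charpsec}, is exactly the step one cannot push to characteristic $p$).

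The substantive step is the range $2\le i\le\lfloor j_B/2\rfloor$. Here I would split on the hypothesis. If $s\le 3$: then $T(B)_i=s\le 3=\dim_\k R_i$ forces $T(B)_i=\dim_\k R_i$ for $1\le i\le j_B-1$ only when $s=3$; when $s\le 2$ the algebra is codimension $\le 2$ (after noting $T(B)_1=s$ unless $k$ is such that the leading $1$ is followed by $s$ in degree $1$) — wait, more carefully: if $s\le 3$ then $\nu(B)\ge 2$, and if $s<3$ then $B$ has embedding dimension $s\le 2$, so $B$ is a codimension $\le 2$ Artinian algebra, which is strong Lefschetz in characteristic zero or $p>j_B$ by \cite[Proposition 2.15]{IMM}; if $s=3$ then $T(B)=(1,3^k,1)$ has Sperner number $3$ occurring at least three times (for $k\ge 3$; the cases $k=1,2$ are handled as in the proof of Theorem \ref{SLthm} via $j_B\le 3$ and weak Lefschetz), so $F$ has a unique tight annihilating scheme $\Z$ of length $3$ with $\tau(\Z)=1$, and Lemma \ref{tightlemma} gives that $\ell^{j_B-2i}\colon B_i\to B_{j_B-i}$ is an isomorphism for $\tau\le i\le j_B/2$, i.e. for all $1\le i\le\lfloor j_B/2\rfloor$. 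If instead $k\ge s+1$: then the sequence $T(B)=(1,s^k,1)$ has $\tau=\min\{i\mid T(B)_i=s\}=1$ and $T(B)\supset(s,s,s)$ (indeed it contains $s$ repeated $k\ge s+1\ge 4$ times), so again Theorem \ref{pithm} applies, $F$ has a unique tight annihilating scheme $\Z$ of length $s$ with $\tau(\Z)=1$, and Lemma \ref{tightlemma} yields that $\ell^{j_B-2i}\colon B_i\to B_{j_B-i}$ is an isomorphism for $1\le i\le\lfloor j_B/2\rfloor$ for a general $\ell$. In either subcase the maps for $i\ge 2$ (in fact for $i\ge 1$) are all isomorphisms, so $B$ is strong Lefschetz.

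Assembling: the three families of maps — degree $0$, degree $1$, and degrees $2,\ldots,\lfloor j_B/2\rfloor$ — are all isomorphisms for a general linear form $\ell$, and one can take a common general $\ell$ since each condition is open and dense; hence $B$ is strong Lefschetz, proving the lemma. The main obstacle is the map $\ell^{j_B-2}\colon B_1\to B_{j_B-1}$: every other map is handled either trivially or by the tight-annihilating-scheme machinery of Theorem \ref{pithm} and Lemma \ref{tightlemma}, but the rank-$3$ statement on $B_1$ genuinely relies on Gordan--Noether and is the reason for the characteristic-zero hypothesis. I should also double-check the boundary behavior of Lemma \ref{tightlemma} when $j_B=2\tau$, i.e. $k=1$, and the small cases $k=1,2$ where $\Gor(T)$ may not contain a $(s,s,s)$ subsequence; these reduce to $j_B\le 3$ and are covered by the socle-degree-$\le 3$ weak/strong Lefschetz results cited in Proposition \ref{WLlem} together with the degree-$1$ argument above.
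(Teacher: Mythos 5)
Your case decomposition (codimension $\le 2$ via Brian\c{c}on; $s=3$ with $k=1,2$ via socle degree $\le 3$ and weak Lefschetz; $s=3$, $k\ge 3$ and $k\ge s+1$ via a tight annihilating scheme with $\tau=1$ and Lemma \ref{tightlemma}) is essentially the paper's proof. But there are two genuine problems. First, in the branch $k\ge s+1$ with $s\ge 4$ you cite Theorem \ref{pithm} to produce the saturated ideal $J\subset I$; that theorem is specific to codimension three (it rests on the Buchsbaum--Eisenbud structure theorem), whereas here $B$ has embedding dimension $s\ge 4$. The correct tool, and the one the paper uses, is Gotzmann's theorem: since $T_s=T_{s+1}=s$ is maximal Macaulay growth (this is exactly where $k\ge s+1$ is used), $J=(I_{\le s+1})$ is saturated and defines a length-$s$ scheme in $\mathbb P^{s-1}$, after which Lemma \ref{tightlemma} applies. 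As written, your proof does not justify the existence of the annihilating scheme for $s\ge 4$.

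Second, your framing of the map $\ell^{j_B-2}\colon B_1\to B_{j_B-1}$ is both internally inconsistent and, if taken at face value, would fail to prove the stated lemma. The lemma asserts \emph{full} strong Lefschetz also when $\cha\k=p>j_B$; an argument for the degree-one map that ``genuinely relies on Gordan--Noether'' cannot deliver that (and Gordan--Noether does not even apply when $s\ge 5$, since $F$ then lives in more than four variables). The whole point of this lemma --- and the reason it yields SL rather than merely almost SL in characteristic $p$ --- is that for almost constant Hilbert function one has $\tau=1$, so Lemma \ref{tightlemma} already gives the isomorphism $\ell^{j_B-2i}\colon B_i\to B_{j_B-i}$ for \emph{all} $1\le i\le j_B/2$, with the small cases ($s\le 2$; $s=3$, $k\le 2$) handled by Brian\c{c}on and by weak Lefschetz in socle degree $3$. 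You in fact observe this in the middle of your argument, so the fix is simply to delete the Hessian/Gordan--Noether step and the claim that characteristic zero is essential; only the $k\ge s+1$ branch then still needs the Gotzmann repair above.
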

\begin{proof}\label{AlmCSL} For codimension $s=1$ or $2$ this is well known: the latter is due to J. Brian\c{c}on  (see \cite[Proposition 2.15]{IMM}). When $s=3$ and $k=1$ there is nothing to show. When $s=3$ and $k=2$ the SL property follows from the WL property, shown in Proposition \ref{WLlem}(iii)  (or for $\sf k$ algebraically closed is easily checked from the classification of nets of conics \cite{AEI}).

Now assume $s=3$ and $k\geq 3$ and set $B=R/I$. Theorem \ref{pithm}  guarantees the existence of an ideal $J\subset I$ so that $R/J$ is saturated, defining a scheme $\Z\subset \mathbb P^2$ and $B_i=(R/J)_i$ for $i\leq s$. Since $R/J$ is saturated a general linear form $\ell$ is a non zero-divisor on $R/J$ and consequently the pair $(B,\ell)$ is strong Lefschetz by Lemma \ref{tightlemma}.

Now assume $s$ is arbitrary and $k\ge s+1$. Then, since Macaulay growth of $T_s=s$ to $T_{s+1}=s$ is maximum, by the Gotzmann theorem $J=(I_{\le s+1})$ is the saturated ideal defining a length $s$ punctual scheme $\Z\subset \mathbb P^{s-1}$ \cite[Proposition C.32]{IK}. The SL property follows from Lemma \ref{tightlemma}.
 \end{proof}

The following result is a crucial ingredient to our proofs in this section. 

\begin{proposition}
\label{prop:almostconstant}
Let $R=\k[x_1,\ldots, x_r]$ with $n\le 3$, let $A=R/I$ be an Artinian Gorenstein algebra  and assume $\cha \k=0$ or $\sf k$ is infinite with $\cha \k=p>j$, the socle degree of $A$. Assume that there is a linear form $x\in R_1$ such that the ring $B=R/(I:(x))$ has almost constant Hilbert function and 
the ring $A/(x)$ is SL. Then $A$ is almost SL.

 If $\cha \k=0$ then $A$ is SL.
\end{proposition}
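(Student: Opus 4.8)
The plan is to relate the Lefschetz maps of $A$ to those of the two ``smaller'' algebras $A/(x)$ and $B = R/(I:x)$, using the standard short exact sequence
\[
0 \longrightarrow B(-1) \stackrel{\cdot x}{\longrightarrow} A \longrightarrow A/(x) \longrightarrow 0,
\]
where the shift reflects that multiplication by $x$ sends $B_{i-1}$ isomorphically onto $(xA)_i = \Ann_A(\,\cdot:x)^{\perp}$... more precisely, $B_{i-1} \cong (xA)_i$ as graded pieces. The first step is to record the numerology: if $j$ is the socle degree of $A$ and $j_B$ the socle degree of $B$, then since $B = R/(I:x)$ is AG (Remark \ref{rem:B}) with dual generator $x \circ F$, we have $j_B = j-1$; and $C := A/(x) = R/(I + (x))$ has some socle degree $j_C \le j$. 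From the exact sequence one gets $T(A)_i = T(B)_{i-1} + T(C)_i$ for all $i$, which pins down $T(C)$ and $T(B)$ from $T(A)$, and in particular tells us $j_C = j$ or $j_C = j-1$ depending on whether $x$ divides (in the dual sense) the socle generator.

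The heart of the argument is a ``going up'' lemma: I would choose a general linear form $\ell$ and show that if $\ell$ restricts to a strong Lefschetz element on $C = A/(x)$, and multiplication by $\ell$ behaves well on $B$ (here is where the almost-constant Hilbert function of $B$ enters), then $\ell$ is strong Lefschetz on $A$ except possibly for the single map $\ell^{j-2}:A_1 \to A_{j-1}$. Concretely, fix $i$ with $2 \le i \le \lfloor j/2\rfloor$ (or $i=0$) and consider the commutative diagram with exact rows obtained by comparing the sequence above in degrees $i$ and $j-i$:
\[
\begin{array}{ccccccccc}
0 &\to& B_{i-1} &\to& A_i &\to& C_i &\to& 0\\
  &   & \downarrow\ell^{j-2i} & & \downarrow\ell^{j-2i} & & \downarrow\ell^{j-2i} & &\\
0 &\to& B_{j-i-1} &\to& A_{j-i} &\to& C_{j-i} &\to& 0.
\end{array}
\]
By the snake lemma, $\ell^{j-2i}:A_i \to A_{j-i}$ is an isomorphism provided the outer two vertical maps are isomorphisms (and a rank count handles the non-square cases, but for $i$ in the stated range the relevant Hilbert-function symmetry of $A$ forces these to be square once we know $C$ is SL and $B$ is SL — note $j-i-1 = j_B - i$ is the ``mirror'' degree for $B$, and $j-i$ is the mirror for $C$ when $j_C=j$, with the $j_C=j-1$ case needing a small separate check). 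The map on the $C$-pieces is an isomorphism because $C$ is strong Lefschetz by hypothesis (using that a general $\ell$ is an SL element of $C$); the map on the $B$-pieces is an isomorphism by Lemma \ref{acSLlem}, since $B$ has almost constant Hilbert function — but only when $B$ falls under the hypotheses of that lemma (Sperner number $\le 3$ or multiplicity $k \ge s+1$). This is exactly the gap I expect to be the main obstacle: Lemma \ref{acSLlem} does \emph{not} cover all almost-constant Hilbert functions, so in general $B$ is only known to be \emph{weak} Lefschetz, or even that might fail. I would handle this by observing that for the exponents $j-2i$ actually occurring with $i \ge 2$, the power of $\ell$ needed on $B$ is $\ell^{(j-1)-2(i-1)} = \ell^{j-2i+1}$ with $i-1 \ge 1$, and on the almost-constant Hilbert function $(1,s^{k},1)$ the only potentially bad map is $\ell: B_0 \to B_1$ or its mirror $B_{j_B-1}\to B_{j_B}$, i.e. the maps touching degree $0$ and the top — and $\ell: B_0 \to B_1$ is obviously injective, while the top map is its dual. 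For the interior maps $B_u \to B_{j_B - u}$ with $1 \le u \le j_B-1$ the target and source are both $s$-dimensional and the map is multiplication by a power of a general linear form on a ring whose Hilbert function is constant in that range, which is handled exactly as in the proof of Lemma \ref{tightlemma} / Theorem \ref{SLthm} via a tight annihilating scheme when $k$ is large, and directly when $k$ is small. So the genuine content is: reduce the almost-constant case to precisely the maps that Lemma \ref{acSLlem} and Lemma \ref{tightlemma} together cover, plus the endpoint maps which are trivial.

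Finally, the excluded map $\ell^{j-2}: A_1 \to A_{j-1}$: by the snake-lemma diagram at $i=1$ its cokernel and kernel are controlled by the maps $B_0 \to B_{j_B-1}$ (i.e. $\ell^{j-2}$ on $B$, from degree $0$ to the penultimate degree) and $C_1 \to C_{j-1}$. The map on $C$ is an isomorphism since $C$ is SL; but the map $B_0 \to B_{j_B - 1}$ is multiplication by $\ell^{j-2}$ landing in the top-but-one piece, and for the almost-constant Hilbert function this is $1 \to s$, which need not have maximal rank unless $s \le$ something — so in general we cannot conclude. Hence $A$ is only \emph{almost} SL. To upgrade to SL when $\cha\k = 0$, I invoke the Gordan-Noether theorem (Theorem \ref{GNlem}) together with Theorem \ref{thm:hessian}: since $r \le 3 \le 4$ and $I \subset \m^2$ (if $A$ were a cone the whole statement degenerates to fewer variables and is handled by induction/codimension two), the Hessian determinant $\det\Hess^1(F)$ is not identically zero, so $\ell^{j-2}: A_1 \to A_{j-1}$ is an isomorphism for general $\ell$; combined with the almost-SL conclusion this gives that a general $\ell$ is simultaneously a strong Lefschetz element, so $A$ is SL. The one subtlety to be careful about here is that ``general'' for the almost-SL maps and ``general'' for the Hessian non-vanishing are both non-empty Zariski-open conditions on $\ell \in A_1$, so their intersection is non-empty over an infinite field, which is assumed.
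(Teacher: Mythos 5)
Your proof follows the same route as the paper: the short exact sequence $0\to B(-1)\xrightarrow{\cdot x} A\to A/(x)\to 0$, the three-row commutative diagram, the snake lemma for $2\le i\le \lfloor j/2\rfloor$, and Gordan--Noether plus Theorem \ref{thm:hessian} to recover the map $A_1\to A_{j-1}$ in characteristic zero. The one place you go astray is the ``main obstacle'' you flag concerning Lemma \ref{acSLlem}: there is no gap there. Since $R$ has at most three variables, $B=R/(I:(x))$ has embedding dimension at most three, so its Sperner number $s=\dim_\k B_1$ is at most $3$, and Lemma \ref{acSLlem} applies outright to give that $B$ is SL; combined with the constancy of $T(B)$ in the middle range this makes every map $\ell^{j-2i}:B_{i-1}\to B_{j-i-1}$ with $i\ge 2$ an isomorphism, exactly as the paper argues. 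Your substitute argument for that step is not sound as stated: for the interior maps $B_u\to B_{j_B-u}$ you appeal to a tight annihilating scheme ``when $k$ is large, and directly when $k$ is small,'' but for small $k$ and large $s$ bijectivity of these maps is precisely the hard open problem about compressed-type algebras, so ``directly'' proves nothing --- it is only the bound $s\le 3$ that saves the day. With that correction your argument coincides with the paper's proof; the remaining small points (the identification $j_B=j-1$ via $G=x\circ F$, the case $i=0$, and intersecting the finitely many open conditions on $\ell$ over an infinite field) are handled as you indicate.
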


\begin{proof}
Set $C=A/(x)=R/(I+(x))=k[y,z]/\overline{I}$, where $\overline{I}=I+(x)/(x)$ and set $j$ to be the socle degree of $A$. By Remark \ref{rem:B}, $B$ is AG with Macaulay dual polynomial $G=x\circ F$ and socle degree $j-1$. 
The following short exact sequence relates the three rings
\begin{equation}
\label{ses}
0 \to B(-1) \xrightarrow{\cdot x} A\to C\to 0
\end{equation}
Let $\ell\in R_1$ be a linear form. Multiplication by powers of $\ell$ in \eqref{ses} produce the following commutative diagram in which $K', K,K''$ denote the kernels and $L',L,L''$ denote the cokernels of the vertical maps

\begin{equation*}
\label{eq:3}
\small
 \begin{tikzcd}
& K' \arrow{d} \arrow{r}  &K   \arrow{d} \arrow{r} & K'' \arrow{d} &  \\
0 \arrow{r}{} & B_{i-1}  \arrow{d}{\ell'^{j-2i}} \arrow{r}  &A_i   \arrow{d}{\ell^{j-2i}} \arrow{r} & C_i \arrow{d}{\ell'^{j-2i}} \arrow{r} &0 \\
0 \arrow{r}{}  & B_{j-i-1} \arrow{r}  \arrow{d} &A_{j-i} \arrow{r}  \arrow{d}&  C_{j-i}  \arrow{r}  \arrow{d}&0 \\
& L'   \arrow{r}  &L    \arrow{r} & L'' &  
\end{tikzcd}
\end{equation*}
Since $T(B)$ is almost constant and $B$ has codimension at most three, hence also Sperner number at most three, $B$ is SL by Lemma \ref{acSLlem}. Consequently, the leftmost vertical maps in the above diagram are isomorphisms for $i>1$. Thus for $i\geq 2$ we have $K'=L'=0$ and the snake lemma yields that the middle vertical map has maximum rank if and only if the rightmost vertical  map does. However,  $C$ is assumed to be a strong Lefschetz algebra and so we see that multiplication by $\ell^{j-2i}$ has maximum rank on $C$ for  general $\ell$ and hence also on $A$ for $i\geq 2$. This shows that $A$ is almost SL.

It remains to show that when $\cha {\sf k}=0$ that for $i=1$, multiplication by $\ell^{j-2}:A_1\to A_{j-1}$ is an isomorphism. Setting $I=\Ann(F)$, the Gordan-Noether Theorem \ref{thm:hessian} completes the proof that $A$ is SL when $\cha {\sf k}=0$.
\end{proof}

\begin{theorem}
\label{thm1,3,4}
 Let $A$ be an AG $\k$-algebra of socle degree $j$  with  $H(A)=(1,3,4, \ldots)$. If  $\ch{\sf k}=0$ then $A$ is SL. If $\ch {\sf k}=p>j$ then $A$ is almost strong Lefschetz.
\end{theorem}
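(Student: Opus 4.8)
The plan is to reduce to Proposition \ref{prop:almostconstant} by finding a linear form $x$ so that the colon ideal $B = R/(I:x)$ has almost constant Hilbert function and the quotient $C = A/(x)$ is strong Lefschetz. First I would analyze the Hilbert function $T = T(A)$. Since $H(A) = (1,3,4,\ldots)$, the order is $\nu(A) = 2$, and by Corollary \ref{charcor} the only possibilities are the order-two sequences beginning $(1,3,4,5,\ldots)$, namely $T = (1,3,4,5,\ldots,s-1,s^k,s-1,\ldots,3,1)$ with $\delta(T) = (1,2,1^{s-3},\underline 0)$; in particular $I_2$ is a single quadric $q$ (one-dimensional, since $T_2 = 4 = \binom{3+1}{2}-2$, wait — $\dim R_2 = 6$, so $\dim I_2 = 2$). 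I would examine the pencil $I_2 \subset R_2$: after a change of coordinates it is spanned by two conics. The key structural point is to choose $x$ dividing a member of this pencil, or more precisely so that $(I:x)_1 \ne 0$, forcing $B$ to have small embedding dimension or at least a Hilbert function that collapses to almost constant form. Concretely, I expect that for a general enough linear form $x$ the algebra $A/(x) = \k[y,z]/\bar I$ is a codimension-two graded Artinian algebra, hence strong Lefschetz by the Brian\c{c}on result (\cite[Proposition 2.15]{IMM}, valid in characteristic zero or $p > j$), and the Macaulay dual generator $G = x\circ F$ of $B$ will be forced to have Hilbert function $(1, s^{k'}, 1)$ because $T(B)_i = T(A)_i - T(C)_i$ and the shape of $T(A)$ combined with $T(C) = (1,2,3,\ldots)$ (truncated) produces exactly a constant middle.

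The main steps, in order, are: (1) use Corollary \ref{charcor} to pin down $T(A)$ to the family $(1,3,4,5,\ldots,s^k,\ldots,3,1)$ and record $\delta(T)$; (2) show $A/(x)$ is codimension two for $x$ general, hence SL by the codimension-two theorem; (3) compute $T(B)$ via the exact sequence \eqref{ses}, which gives $T(B)_{i-1} = T(A)_i - T(A/(x))_i$ once one knows the connecting maps, equivalently using $T(C) = \Delta T(A)$-type truncation coming from $x$ being a nonzerodivisor in low degrees, and verify this equals an almost constant sequence $(1,s',\ldots,s',1)$; (4) check the hypotheses of Lemma \ref{acSLlem} for $B$ — either $s' \le 3$ or the multiplicity of $s'$ is at least $s'+1$ — so that $B$ is SL; (5) invoke Proposition \ref{prop:almostconstant} to conclude $A$ is almost SL, and SL when $\cha \k = 0$.

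The delicate point — the step I expect to be the main obstacle — is step (3)–(4): verifying that $T(B)$ really is \emph{almost constant} and that it satisfies the numerical hypotheses of Lemma \ref{acSLlem}. The Hilbert function $T(B)$ is determined by $T(A)$ and $T(A/(x))$ only once one controls the ranks of the connecting maps $C_i \to B_{i-1}$ in the long exact sequence associated to \eqref{ses}, which in turn requires knowing that $x$ is a nonzerodivisor on $A$ in the relevant degrees — false in general, since $A$ is Artinian, but the relevant degrees are only the bottom ones. For $x$ general, $(I:x)$ agrees with $I$ up to the socle degree $j-1$ in low degrees, and the computation $T(B) = (T(A)_1, T(A)_2, \ldots)$ shifted and truncated should give $(1, (s-1)?, \ldots)$ — I would need to check carefully whether the resulting constant value $s'$ is $\le 3$ (handled by the first case of Lemma \ref{acSLlem}) or whether one must instead arrange the multiplicity condition $k' \ge s'+1$, possibly by choosing $x$ more cleverly (e.g.\ as a linear factor of a reducible member of the pencil $I_2$, which exists since a pencil of conics in $\mathbb P^2$ always contains a singular conic). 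If neither numerical case of Lemma \ref{acSLlem} applies for \emph{every} algebra in the family, one would fall back on an inductive argument on $s$, peeling off one linear form at a time and tracking the Hilbert function, which is why the statement is phrased for the whole family $(1,3,4,\ldots,s^k,\ldots)$ rather than a single case.
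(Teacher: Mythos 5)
Your overall strategy (reduce to Proposition \ref{prop:almostconstant} via a linear form $x$ with $R/(I:x)$ of almost constant Hilbert function and $A/(x)$ of codimension two) is exactly the paper's, but the step you flag as delicate is genuinely unresolved in your write-up, and the two fixes you float do not work. First, a \emph{general} linear form $x$ fails: for, say, $T(A)=(1,3,4,5,6^k,5,4,3,1)$ and generic $x$, the sequence \eqref{ses} gives $T(B)$ beginning $(1,3,4,5,\ldots)$, nowhere near almost constant, so Proposition \ref{prop:almostconstant} does not apply. Second, taking $x$ to be a linear factor of \emph{one} singular member of the pencil $I_2$ only yields a single linear form in $(I:x)_1$; that makes $B$ of codimension at most two but does not force $T(B)$ to be almost constant, and almost-constancy is essential in Proposition \ref{prop:almostconstant} (it is what makes the outer vertical maps in the snake-lemma diagram bijections rather than merely of maximal rank). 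The missing observation is stronger and is forced by the Hilbert function: after splitting off the case $T(A)_3\le 4$ --- i.e.\ $T=(1,3,4^k,3,1)$, which must be handled by the annihilating-scheme argument of Theorem \ref{SLthm} since there the two quadrics may form a regular sequence and no good $x$ exists --- one has $T(A)_3=5$, and then the two quadrics spanning $I_2$ \emph{cannot} be a regular sequence (a regular sequence of two conics gives Hilbert function $(1,3,4,4,\ldots)$, contradicting $T_3=5$). Hence they share a common linear factor $x$, so $I_2\cong\langle xy,xz\rangle$ or $\langle x^2,xy\rangle$ up to $\Pgl(3)$, whence $(I:x)$ contains \emph{two} independent linear forms and $B=R/(I:x)$ is a quotient of a polynomial ring in one variable with $T(B)=(1^j)$. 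With that specific $x$, your steps (2)--(5) go through verbatim.

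So the gap is concrete: you never identify the correct $x$ nor prove $T(B)$ is almost constant, and the candidates you propose (generic $x$, or a factor of a singular conic in the pencil) would both leave Proposition \ref{prop:almostconstant} inapplicable. The repair is the common-factor argument above together with the explicit case split at $T(A)_3\in\{3,4\}$ versus $T(A)_3=5$; no induction on $s$ is needed.
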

\begin{proof}
From the characterization of codimension three Gorenstein sequences in Corollary \ref{charcor} we see that for $T(A)=(1,3,4,\ldots)$ we have $T(A)_3=3$ if and only if $j=4$, otherwise $T(A)_3\in \{4,5\}$. The case $T(A)_3\in \{3,4\}$ is treated in Theorem \ref{SLthm} (Note that $T(A)_2=T(A)_3=4$ implies by Lemma \ref{cod3SIem}
that $T=(1,3,4^k,3,1)$ ).

Now assume $T(A)_3=5$. The precise form of the Hilbert function in this case is $T=(1,3,4,5,\ldots,s-1,s^k,s-1,\ldots ,4,3,1)$ according to Corollary \ref{charcor}. Suppose $A={\sf k}[x,y,z]/I$. 
In this case there must be a linear relation among the two basis elements of $I_2$, else $T(A)=(1,3,4,4,\ldots)$. So
 we have $I_2\cong \langle xy, xz\rangle$ or $I_2\cong\langle x^2,xy\rangle$, up to $\Pgl(3)$ action. Then either  $(y,z)\subseteq I:(x)$ or $(x,y)\subseteq I:(x)$, respectively and thus the AG ring $B={\sf k}[x,y,z]/(I:(x))$ has constant Hilbert function $T(B)=(1^{j})$. Moreover $A/(x)$ is a codimension two algebra and hence is SL under the given assumption on characteristic. Proposition \ref{prop:almostconstant} now yields that $A$ is SL if $\cha {\sf k}=0$ and is almost SL if $\cha{\sf k}=p>j$. 
\end{proof}

\begin{remark}
An alternate proof of the strong Lefschetz property in the case when the defining ideal of $A$ contains the forms $xy, xz$ is to observe that this implies $A$ can be written as a connected sum $A=U\#_\k V$ where $U$ is an AG quotient of $\k[x]$ and $V$ is an AG quotient of $\k[y,z]$. Since such $U,V$ are SL under the assumptions on the characteristic of $\k$ in the statement of Theorem \ref{thm1,3,4}, it follows from \cite[Proposition 5.7]{IMS} or \cite[Theorem 3.76, Proposition 3.77ii]{H-W} that $A$ is SL as well. This alternative proof is fine for $\cha {\sf k}=p>j$, also.
\end{remark}

\subsubsection{Case $T=(1,3,5,6^k,5,3,1)$.}
\begin{theorem}
\label{thm1,3,5,6}
 Let $A$ be an Artinian Gorenstein algebra of socle degree $j\geq 5$ with $T(A)=(1,3,5, 6^{j-5}, 5, 3, 1)$. If $\ch {\k}=0$, then $A$ is SL. If $\ch{\k}=p>j$, then $A$ is almost SL.
 \end{theorem}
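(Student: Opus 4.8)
The plan is to verify, for a general linear form $\ell$, that the multiplication maps $\ell^{j-2i}\colon A_i\to A_{j-i}$ are isomorphisms for $0\le i\le\lfloor j/2\rfloor$ (and for those $i\ne1$ when $\ch{\k}=p>j$); by the Hessian criterion (Theorem~\ref{thm:hessian}) this is exactly the asserted Lefschetz property. Extending scalars to $\bar{\k}$ is harmless, since the strong (resp.\ almost strong) Lefschetz locus in $\PP(A_1)$ is Zariski open, defined over $\k$, and has a $\k$-point whenever it is nonempty over $\bar{\k}$. As $T(A)_2=5=\dim_{\k}R_2-1$, the ideal $I$ has a single quadric generator $I_2=\langle q\rangle$, and up to $\Pgl(3)$ we may assume $q\in\{z^2,\;xy,\;\text{a smooth conic}\}$. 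The map for $i=0$ is automatic, and for $i=1$ the form $F$ is not a cone (because $T(A)_1=3$), so $\det\Hess(F)\not\equiv0$ by the Gordan--Noether Theorem~\ref{GNlem} and $\ell^{j-2}\colon A_1\to A_{j-1}$ has rank three for general $\ell$; this step uses $\ch{\k}=0$, which is exactly why only ``almost SL'' is available in characteristic $p$. If $j=5$ the sequence is $(1,3,5,5,3,1)$, of Sperner number five, and we are done by Theorem~\ref{SLthm}; so assume $j\ge6$ and consider $2\le i\le\lfloor j/2\rfloor$.

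Suppose first that $I_3\subseteq(x)$ for some linear factor $x$ of $q$. Then $q/x$ is a linear form with $x\cdot(q/x)=q\in I$, so $q/x\in I\colon x$, and likewise any cubic generator divided by $x$ lies in $I\colon x$; hence $B=R/(I\colon x)$ has codimension at most two. Using $T(B)_i=T(A)_{i+1}-T(A/(x))_{i+1}$ together with the numerical constraints imposed by the shape of $T(A)$, one checks that $T(B)$ is almost constant --- a complete intersection of type $(2,j-1)$ when $B$ has codimension two, and $(1^{j})$ when it has codimension one. Since $A/(x)$ is a codimension $\le2$ Artinian algebra, hence strong Lefschetz under the hypothesis on $\ch{\k}$, Proposition~\ref{prop:almostconstant} applies at once and gives that $\ell^{j-2i}\colon A_i\to A_{j-i}$ is an isomorphism for all $i\ge2$ (indeed for $i\ne1$); thus $A$ is almost SL, and SL when $\ch{\k}=0$.

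Otherwise $q$ and any cubic generator $g$ of $I$ have no common factor, so $J:=(q,g)$ is a complete intersection of type $(2,3)$ --- hence saturated --- cutting out a length-$6$ scheme $\Z\subset\PP^2$ with Hilbert function $(1,3,5,\underline6)$ and $\tau(\Z)=3$. Comparing Hilbert functions shows that $I$ has exactly one further minimal generator $\phi$, of degree $j-2$, that $\phi$ is a nonzerodivisor on $R/J$, and hence that $A=R/(q,g,\phi)$ is a complete intersection of type $(2,3,j-2)$. As $\Z$ is a tight annihilating scheme of $F$, Lemma~\ref{tightlemma} yields that $\ell^{j-2i}\colon A_i\to A_{j-i}$ is an isomorphism for general $\ell$ and $3\le i\le j/2$, so only $i=2$ remains. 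When $q$ is a smooth conic, $\mathcal{C}=\{q=0\}\cong\PP^1$ is a smooth subscheme of $\PP^2$ of regularity $2$ and $A$ is an Artinian Gorenstein quotient of its coordinate ring $A(\mathcal C)$; since the socle degree satisfies $j\ge6\ge2\tau(\mathcal C)=4$, Theorem~\ref{althm}(i) gives that $A$ is strong Lefschetz when $\ch{\k}=0$.

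The crux is the map $\ell^{j-4}\colon A_2\to A_{j-2}$, between two $5$-dimensional spaces, in the cases left over: $q$ singular (so $\mathcal C$ is not smooth) and --- even for $q$ smooth --- positive characteristic. Writing $B=R/J$, one has $A_2=B_2$ and $A_{j-2}=B_{j-2}/\langle\phi\rangle$, while $\ell^{j-4}\colon B_2\to B_{j-2}$ is injective for general $\ell$ (a nonzerodivisor on $B$); hence $\ell^{j-4}\colon A_2\to A_{j-2}$ fails to be an isomorphism exactly when $\phi\in\ell^{j-4}B_2\subseteq B_{j-2}=H^0(\OO_\Z)$. I would rule this out for general $\ell$ by restricting to $\Z$: $\phi$ is a nonzerodivisor on $B$, hence a unit in $\OO_\Z$, and for $j\ge7$ forms of degree $j-4\ge\operatorname{reg}(\Z)=3$ restrict surjectively onto $H^0(\OO_\Z)$, which forces the subspaces $\ell^{j-4}B_2=\ell^{j-4}|_\Z\cdot W$ (with $W$ the fixed codimension-one image of $R_2$, and $\ell^{j-4}|_\Z$ a unit for general $\ell$) to sweep $H^0(\OO_\Z)$ so that, by duality on the Gorenstein algebra $\OO_\Z$, a unit such as $\phi$ cannot lie in $\ell^{j-4}B_2$ for general $\ell$. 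The borderline $j=6$, where $A$ is a complete intersection of type $(2,3,4)$, must be settled separately --- e.g.\ by Theorem~\ref{althm}(i) applied to $\Z$ when $\Z$ is reduced, and by a direct Hessian computation otherwise. Making this $i=2$ step work uniformly in all admissible characteristics and for non-reduced $\Z$ is, I expect, the main technical obstacle.
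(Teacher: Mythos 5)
Your reduction to the two cases, and your treatment of the first one, match the paper: when the quadric $f$ and the cubic generator $g$ share a linear factor $x$, the paper also passes to $B=R/(I:x)$, checks $T(B)$ is almost constant, and invokes Proposition~\ref{prop:almostconstant}; and in the complete intersection case it likewise disposes of $i=0$, $i=1$ (Gordan--Noether) and $i\ge 3$ ($\ell$ a non zero-divisor on $A'=R/(f,g)$), leaving $\ell^{j-4}:A_2\to A_{j-2}$ as the crux. Up to that point the proposal is sound.

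But that crux is exactly where the proposal has a genuine gap, one you partly acknowledge. You correctly reformulate the problem as showing that the image $\bar\phi$ of the third complete intersection generator avoids the moving $5$-dimensional subspace $\ell^{j-4}B_2$ of the $6$-dimensional space $B_{j-2}\cong H^0(\OO_{\Z})$ for some (hence general) $\ell$. The ``sweeping plus Gorenstein duality'' heuristic does not close this: what is needed is the non-vanishing, for some $\ell$, of a fixed linear functional evaluated on $(\ell^{j-4})|_{\Z}\cdot \bar\phi$ (after trivializing), and the units $(\ell^{j-4})|_{\Z}$ form only a two-parameter family inside the six-dimensional algebra $\OO_{\Z}$ --- far from dense --- so the fact that \emph{some} unit pairs nontrivially with the socle functional proves nothing about this particular family. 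Showing that the union of the subspaces $\ell^{j-4}B_2$ is all of $B_{j-2}$ is also not the right statement; one must show $\bar\phi$ is not in their (generic) intersection. The paper's actual argument is quite different and is the heart of the proof: assuming $\ell^{j-4}q(t)=0$ has a solution $q(t)\in\PP(A_2)$ for every form $L(t)=\ell+tv$ in a pencil, it shows this solution is projectively unique, expands $q(t)=\sum q_it^i$ as a power series, and from the coefficient of $t$ deduces $\dim_{\k}(q_0A_1)=1$, hence $\gcd(q_0,f)\neq 1$, contradicting that $I=(f,g,\ell^{j-4}q_0)$ is generated by a regular sequence. This works uniformly for all $j\ge 6$ (so no separate $j=6$ analysis is needed), for singular as well as smooth conics $f$, and in characteristic $p>j$ (the only place $\cha\k=0$ enters is Gordan--Noether at $i=1$); your appeal to Theorem~\ref{althm}(i) for smooth $f$ in characteristic zero covers only a fragment of what must be proved. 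As written, the theorem is not established by the proposal.
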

\begin{proof}
Theorem 3.3 deals with $j=5$, so we may assume $j\ge 6$.
Let $A=R/I$ with $R=\k[x,y,z]$. Then the lowest graded components of $I$ are given by $I_2=\langle f \rangle$ for some $f\in R_2$ and $I_3=\langle xf, yf, zf, g \rangle$  for some $g\in R_3\setminus (x,y,z)f$. 

{\em Case (i):} If $f,g$ do not form a regular sequence then they have a common linear divisor. Without loss of generality assume $x$ is this common divisor and thus $f=xf'$ and $g=xg'$ with  $f'\in R_1, g'\in R_2$. Set $C=A/(x)$ and $B=A/(0:_A x)$ and note that both $B$ and $C$ are AG algebras of codimension two; the latter is because $(0:_Ax)$ contains the linear form $f'$. By Remark \ref{rem:B}, $B$ is AG of socle degree $j-1$.

Since $f',g' \in (0:_Ax)$, we have  $T(B)_1=T(B)_2=2$ and therefore by the properties of O-sequences and the symmetry of AG Hilbert functions one concludes that
\[
T(B)_i=
\begin{cases}
1 & \text{for } i=0,  i= j-1\\
2 & \text{for } i\in [1, j-2]\\
0 & \text{for } i\geq j.
\end{cases}
\]
Thus $B$ has almost constant Hilbert function and $C$ is SL since it is a codimension two Artinian algebras, so Proposition \ref{prop:almostconstant} applies to conclude that $A$ is SL, as desired when $\cha {\sf k}=0$, or that $A$ is almost SL when $\cha {\sf k}=p>j$.\vskip 0.2cm

{\em Case (ii):} Suppose $f,g$  form a regular sequence. Then $A$ is a complete intersection with  defining ideal $I=(f,g,h)$ so that $\deg(h)=j-2\geq 4$. Indeed, the Hilbert function of $R/(f,g)$ being $1,3,5, 6,6, \cdots$, $I$ must have  a unique minimal generator in degree $j-2$. To see that $I$ has no  minimal generators in degree $j-1$ or $j$ consider the symmetry of the graded free resolution of $A$. In  detail, the existence of minimal generators of $I$ in degree $j-1$ or $j$ would imply by symmetry the existence of minimal syzygies of $I$ in degree $4$ or $3$ respectively. As the unique minimal syzygy of $(f,g)$ is in degree 5, and  $\deg(h)\geq 4$, this yields a contradiction.

A general linear form $\ell\in R_1$ satisfies $\ell^j\neq 0$ and by Gordan-Noether, when $\cha {\sf k}=0$ it yields a bijection $\ell^{j-2}:A_1\to A_{j-1}$. Set $A'=\k[x,y,z]/(f,g)$.  Since ${\rm depth}(A')=1$, a general linear form $\ell$ is  a non zero-divisor on $A'$.  Observe that $A'_i=A_i$ for $i\leq j-3$ and thus the maps $\ell^{j-2i}:A_i\to A_{j-i}$ are bijective for $i\geq 3$ since $\ell$ is a non zero-divisor on $A'$. It remains to confirm that the map $\ell^{j-4}:A_2\to A_{j-2}$ is also bijective. 

The following argument is inspired by \cite[proof of Proposition 2.4]{BFP}. 
Assume by way of contradiction that for every $L\in A_1$ there exists $q\in A_2$ so that $L^{j-4}q=0$. 

{\em Claim:} for each  $L\in A_1$ that is regular (non zero-divisor) on $A'$ there exists exactly one $q\in \PP(A_2)$ so that $L^{j-4}q=0$. Indeed, suppose that $L^{j-4}q=L^{j-4}q'=0$ in $A=A'/(h)$. This yields $L^{j-4}q\in (h)A'$ and $L^{j-4}q'\in (h)A$, which, since $h\in A^\prime_{j-2}$, gives relations $L^{j-4}q=\lambda h$ and $L^{j-4}q'=\mu h$ in $A'$ for some $\lambda, \mu \in \k^\times$. The identity $L^{j-4}(\mu q-\lambda q')=0$ in $A'$ combined with the fact that $L$ is regular on $A'$ yields that $q, q'$ are linearly dependent.

Fix $v\in R_1$, set $L(t)=\ell+tv$ and set $C=\{L(t): t\in \k\}$ to be the image of the regular map $L: \AA_\k^1\to \PP(A_1), t\mapsto \ell+tv$ . The set of forms that are non zero-divisors on $A'$ forms a nonempty Zariski open set.  Denote by $U'$ the intersection of the set of non zero-divisors on $A'$ with $C$. Then $U'$ is a nonempty (it contains $L(0)=\ell$) Zariski open subset of $C$. Moreover the set $U=L^{-1}(U')$ is a nonempty Zariski open subset of $\AA_k^1$ which contains $0$. For each $L(t)\in U'$ the claim yields a unique $q(t)\in \PP(A_2)$ so that $L(t)^{j-4}q(t)=0$.

Consider more generally the incidence correspondence $\Gamma\subset \PP(A_1)\times \PP(A_2)$ where 
\[
\Gamma=\{(L(t),q) \mid L(t)=\ell+tv\in A_1, q\in A_2, L(t)^{j-4}q=0\}
\]
and define  $\pi_2: \Gamma\to \PP(A_2), \pi_2(L(t),q)=q$. Let $U'=\pi_1(U)$. Based on the claim, there is a well defined regular map
\[
\phi: U\to \PP(A_2), \ \phi(t)=\pi_2(L(t))=q(t)=(u_0(t): \ldots : u_4(t))
\]
which induces a homomorphism
\[
\psi: \k[\PP(A_2)] \to \OO_{\AA^1}(U), \text{ given by }\theta\mapsto \theta(q(t) ),\ \forall \theta\in \k[\PP(A_2)]=\k[x_0,\ldots, x_4].
\]
Composing $\psi$ with the inclusions $\OO_{\AA^1}(U)\hookrightarrow \OO_{0,\AA^1}=\k[t]_{(t)} \hookrightarrow \widehat{\OO_{0,\AA^1}}=\k[[t]]$ and taking $\theta_i=x_i$ to be the coordinate functions on $\PP(A_2)$ allows to express each coordinate $u_i(t)$ of $q(t)$ as a power series in $\k[[t]]$.
Hence $q(t)$ itself can be written as
\[
q(t)=\sum_{i=0}^\infty q_i t^i   \text{ with } q_i\in A_2.
\]
Rewrite $L(t)^{j-4}q(t)=0$ as
\[
(\ell+tv)^{j-4}\left ( \sum_{i=0}^\infty q_i t^i\right ) =0, \forall v\in A_1
\]
where $0\neq q_0=q(0)\in R_2$ is such that $\ell^{j-4}q_0=0$.
The coefficient of $t$ in the above displayed equation is $(j-4)\ell^{j-5}q_0v+\ell^{j-4}q_1$, which must be 0 for all $v\in A_1$. Since 
\[
\ell^{j-5}\left( (j-4)q_0v +\ell q_1 \right)=0,  \forall v\in A_1
\]
we see that the kernel $K$ of the map  $\ell^{j-5}:A_3\to A_{j-2}$ has dimension 
\[
\dim_\k K\geq \dim_\k  {\rm Span}\{(j-4)q_0v +\ell q_1\mid v\in A_1\} \geq \dim_\k  \{q_0v \mid v\in A_1\}.
\]
For the last displayed inequality above we have used that $j-4\neq0$ since $\cha \k>j$.

As $A_{\leq j-3}=A'_{\leq j-3}$, we have that $\ell^{j-5}:A_2\to A_{j-3}$ is injective and thus, by Gorenstein duality, the map $\ell^{j-5}:A_3\to A_{j-2}$ is surjective for a general $\ell\in R_1$. This means $\dim_\k K=1$ which yields $ \dim_\k  \{q_0v \mid v\in A_1\}=1$ (note that the latter cannot be 0 since $q_0\ell\neq 0$). Consequently, we have $\dim((q_0)\cap(f,g))_3=2$. Since $(f,g)_3=\langle xf, yf, zf, g \rangle$, it follows that there exists a nonzero element in $((q_0)\cap(f))_3$ and thus $\gcd(q_0,f)\neq 1$ (otherwise $(q_0)\cap(f)=(q_0f)$ is generated in degree 4).

Lastly, recall that $\ell^{j-4}q_0=0$ in $A$, but $\ell^{j-4}q_0\neq0$ in $A'$ so $\ell^{j-4}q_0\in (f,g,h)\setminus(f,g)$. Since $\deg(h)=\deg(\ell^{j-4}q_0)$ we see that $I=(f,g,h)=(f,g, \ell^{j-4}q_0)$. It was shown above that $I$ must be generated by a regular sequence, but the sequence $f,g, \ell^{j-4}q_0$ is not a regular sequence since $\gcd(q_0,f)\neq 1$, a contradiction. This completes the proof.
\end{proof}

\subsection{Residue fields of positive characteristic}\label{charpsec}
In the cases covered by our Main Theorem, our proofs show that when $\sf k$ is an infinite field of $\cha{\sf k}=p>j$, where $j$ is the socle degree of $A$, then $A$ is almost strong Lefschetz: that is, the maps $\ell^{j-2i}:A_i\to A_{j-i}$ are isomorphisms for $2\le i\le j/2$, when $\ell$ is a general enough linear form. Thus, 
$\cha {\sf k}=0 $ is needed only for the Gordan-Noether theorem, that we use to show 
$\ell^{j-2}: A_1\to A_{j-1}$ is an isomorphism. We propose

\begin{conjecture} 
\label{conj}
When $A$ is an AG algebra of codimension three, and $\sf k$ is an infinite field of $\cha {\sf k}=p>j_A$, then the map $\ell^{j-2}:A_1\to A_{j-1}$ is an isomorphism for a general enough linear form $\ell$.
\end{conjecture}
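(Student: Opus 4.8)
The plan is to reduce the conjecture to a positive-characteristic version of the Gordan--Noether theorem for ternary forms, and then to attack that statement. For a codimension three AG algebra $A=R/I$ with $R=\k[x,y,z]$ and Macaulay dual generator $F\in S_j$, both $A_1$ and $A_{j-1}$ have dimension three (the latter by symmetry of the Gorenstein Hilbert function), and $I\subset\m^2$, so $F$ is not a cone. Because $\cha\k=p>j$, the divided power algebra agrees with $\k[X,Y,Z]$ in all degrees $\le j$ after rescaling the basis monomials by units; consequently the identification underlying the Maeno--Watanabe criterion \cite{MW} goes through for $i=1$: in bases dual under the Macaulay pairing $A_1\times A_{j-1}\to A_j\cong\k$, the matrix of $\ell^{j-2}\colon A_1\to A_{j-1}$ is, up to a nonzero scalar, $\Hess^1(F)$ evaluated at the coefficient vector of $\ell$ (a direct computation with contraction). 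Since $\k$ is infinite, $\ell^{j-2}$ is therefore an isomorphism for a general $\ell$ if and only if the polynomial $\det\Hess^1(F)$ is not identically zero. Thus the conjecture is equivalent to the assertion that a ternary form of degree $j<p$ that is not a cone has nonvanishing Hessian determinant.

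To prove that assertion I would adapt a classical proof of Gordan--Noether, say the one through the polar (Gauss) map $\Phi\colon\PP^2\dashrightarrow\PP^2$, $[q]\mapsto[\nabla F(q)]$. One has $\det\Hess(F)\equiv 0$ exactly when $\Phi$ is not dominant, i.e.\ $\dim\overline{\Im\Phi}\le 1$; assuming this, the classical argument shows a general fibre of $\Phi$ is a line, the fibre-lines are concurrent, and their common point is the vertex of the cone, contradicting $F$ not being a cone. The basic tools — Euler's identity $\sum_i x_i\partial_i F=jF$ and the finite Taylor expansion $F(q+tv)=\sum_{k\le j}\tfrac{1}{k!}t^kD_v^kF(q)$ — survive unchanged since $p\nmid j$ and $k!$ is a unit for $k\le j<p$, and the ternary case is more elementary than the quaternary one treated in \cite{Los,WatdeB,BFP}, so there is reasonable hope that a careful rewriting succeeds. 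Note that lifting $F$ to a non-cone $\widetilde F$ in characteristic zero, where $\det\Hess(\widetilde F)\ne 0$ by the classical theorem, is \emph{not} a genuine shortcut: $\det\Hess(\widetilde F)$ reduces modulo $p$ to $\det\Hess(F)$, so the content of $\det\Hess(\widetilde F)$ is divisible by $p$ precisely when the original $\det\Hess(F)$ vanishes — which is exactly what one is trying to exclude.

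The step I expect to be the main obstacle is precisely the structural part of the Gordan--Noether argument in characteristic $p$: showing that the general fibre of $\Phi$ is a \emph{reduced line}. Even with $p>j$ one must rule out the possibility that this fibre is one-dimensional but nonreduced or nonlinear because $\Phi$ is inseparable onto its image, a possibility the characteristic-zero proofs dispose of via generic smoothness or the Jacobian criterion in a form that is not automatic in positive characteristic; the subsequent concurrency step likewise rests on a differential identity whose validity must be rechecked term by term. Finally, the codimension-three structural machinery does not seem to help here: the morphism $\pi\colon\Gor(T)\to\Hilb^s(\PP^2)$ of Theorem \ref{pithm} and the analysis of $I\colon x$ used in Theorems \ref{SLthm}, \ref{thm1,3,4} and \ref{thm1,3,5,6} control only the maps $\ell^{j-2i}\colon A_i\to A_{j-i}$ with $i\ge 2$, so some essentially new input on Hessians of ternary forms in characteristic $p$ appears unavoidable.
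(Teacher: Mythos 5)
This statement is presented in the paper as Conjecture \ref{conj}: the authors give no proof, and they say explicitly in Section \ref{charpsec} that the hypothesis $\cha{\sf k}=0$ in their results is needed precisely because they could not establish this. Your proposal does not close that gap. Its first half is a sound and correctly executed reduction: since $p>j$, the divided power structure and the multinomial coefficients appearing in the Maeno--Watanabe computation are units, so the rank of $\ell^{j-2}\colon A_1\to A_{j-1}$ is governed by $\det\Hess^1(F)$ evaluated at the coefficients of $\ell$, and over an infinite field the conjecture becomes equivalent to a positive-characteristic Hesse/Gordan--Noether statement for ternary forms of degree $j<p$ that are not cones. This matches the paper's own framing of where the difficulty lies, and your observation that lifting $F$ to characteristic zero is not a shortcut is correct and worth making.

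The second half, however, is a plan rather than an argument, and you say so yourself: the structural core of the Gordan--Noether proof --- that when the polar map $\Phi$ is not dominant its general fibre is a reduced line and the fibre-lines are concurrent --- is exactly the step you do not carry out, and it is exactly the step where positive characteristic bites (generic smoothness fails, $\Phi$ could be inseparable onto its image, and the differential identities used for concurrency must be reverified). Until that is supplied, nothing beyond the (already known, and already implicit in the paper) equivalence with the Hessian statement has been established. So the proposal should be regarded as an honest reduction of the open conjecture to another open statement, not as a proof; the conjecture remains open after your argument just as it does in the paper.
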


 In the following case we are able to bypass the Gordan-Noether theorem and give a strengthening of Lemma~\ref{acSLlem} for codimension three AG algebras of almost constant Hilbert function.
 
\begin{lemma}\label{13k1SLlem} Let $A$ be a graded AG algebra over an algebraically closed field $\k$ of characteristic $p\not= 2$ or $3$. Let $A=R/I, I=\Ann(F)$ and assume $T(A)=(1,3^k,1), k\ge 2$. Then $A$ is strong Lefschetz.
\end{lemma}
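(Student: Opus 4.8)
The plan is to check, for a general linear form $\ell$, that every multiplication map $\ell^{j-2i}\colon A_i\to A_{j-i}$ with $0\le i\le\lfloor j/2\rfloor$ is bijective (here $j=k+1$ is the socle degree), and — this is the point of the subsection — to do so \emph{without} invoking the Gordan--Noether Theorem~\ref{GNlem}, which is the sole source of the characteristic-zero restriction in our other arguments. I would split the proof according to whether $k\ge 3$ or $k=2$. Since $\k$ is algebraically closed it is in particular infinite, so Theorem~\ref{pithm} is available.

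Assume first $k\ge 3$. Then $T(A)=(1,3^k,1)\supset(3,3,3)$ and $\tau:=\min\{i\mid T_i=3\}=1$, so Theorem~\ref{pithm} yields the unique tight annihilating scheme $\Z\subset\mathbb{P}^2$ of $F$, a length-$3$ subscheme with saturated ideal $J=(I_2)$, with $B:=R/J$ of Hilbert function $(1,3,3,\ldots)$, and with $I_t=J_t$ for $2\le t\le j-2$. For a general $\ell$, which is a nonzerodivisor on $B$, Lemma~\ref{tightlemma} applied with $\tau=1$ already gives that $\ell^{j-2i}\colon A_i\to A_{j-i}$ is an isomorphism for \emph{all} $1\le i\le\lfloor j/2\rfloor$; in particular the delicate case $i=1$, which in characteristic zero would require Gordan--Noether, is settled here with no restriction on the characteristic. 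The only remaining map is $\ell^{j}\colon A_0\to A_j$, i.e. the nonvanishing of $\ell^{j}$ in $A$. For this I would argue through the dual generator: since $J=I(\Z)\subseteq I=\Ann(F)$, the form $F$ is apolar to the ideal of $\Z$ in degree $j$, hence $F$ is a combination of the ``generalized $j$-th power'' forms attached to the points of $\Z$ (or their divided-power analogues in the non-reduced cases — the fat-point case being excluded, as it would force $\Ann(F)_1\ne0$), necessarily with all coefficients nonzero since $\Z$ is the \emph{minimal} annihilating scheme. A short computation of $\ell^{j}\circ F$ in these coordinates then shows it is a nonzero polynomial in the coefficients of $\ell$, so $\ell^j\circ F\ne0$ for general $\ell$, in every characteristic; informally, the one-dimensional socle $A_j$ only ``sees'' a single point of $\Z$, at which a general $\ell$ is a unit.

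Now assume $k=2$, so $j=3$ and only $\ell^{3}\colon A_0\to A_3$ and $\ell\colon A_1\to A_2$ must be checked. For the first: as $\ch\k=0$ or $p\ge5>3$, the cubes $\{\ell^{3}\mid\ell\in R_1\}$ span $R_3$, hence do not all lie in the hyperplane $\Ann(F)_3\subset R_3$, so $\ell^{3}\circ F\ne0$ for general $\ell$; this is the only use of $\ch\k\ne 3$. For the second, $I_1=0$ (since $T_1=3$), so $A_1=R_1$ and $\dim I_2=3$, and $\ell\colon A_1\to A_2$ fails to be injective precisely when $\ell$ divides some conic in the net $\mathbb{P}(I_2)\subset\mathbb{P}(R_2)=\mathbb{P}^5$. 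I would show $\mathbb{P}(I_2)$ is not contained in the discriminant hypersurface $\Delta$ of singular conics: when $\ch\k\ne 2$ the only planes $\mathbb{P}^2\subset\Delta$ are the two classical families (all conics through a fixed line; all pairs of lines through a fixed point), and for $I_2=\Ann(F)_2$ either of these would force $\Ann(F)_1\ne0$, contradicting $T_1=3$. Hence $\mathbb{P}(I_2)\cap\Delta$ is a curve, the linear forms dividing conics in the net fill out at most a curve in $\mathbb{P}(R_1^{\vee})$, and a general $\ell$ gives an isomorphism $A_1\to A_2$. (Equivalently this is weak Lefschetz for $(1,3,3,1)$, which one may also read off from the classification of nets of conics in \cite{AEI}.) Together these make all relevant maps isomorphisms for general $\ell$, so $A$ is strong Lefschetz.

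The step I expect to be the genuine obstacle is the $k=2$ case — the bijectivity of $\ell\colon A_1\to A_2$ — because it is exactly weak Lefschetz for the Hilbert function $(1,3,3,1)$ over a field of positive characteristic, and it truly \emph{fails} without $\ch\k\ne2$: for $F=XYZ$ in characteristic $2$ one has $I_2=\langle x^2,y^2,z^2\rangle$, the squaring map $R_1\to R_2$ is linear, so every linear form divides a member of this net and $\ell\colon A_1\to A_2$ is never bijective. Thus the argument cannot be a uniform algebraic manipulation but must pass through the projective geometry of nets of conics. The case $k\ge 3$, by contrast, is comparatively soft: it is an assembly of Theorem~\ref{pithm} and Lemma~\ref{tightlemma} together with the observation that the socle of $A$ is concentrated, as a module over the coordinate ring of the tight scheme, at one point where a general $\ell$ is invertible.
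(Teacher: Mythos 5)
Your proof is correct, and it splits into a part that matches the paper and a part that does not. For $k\ge 3$ your argument is the paper's: both invoke Theorem~\ref{pithm} and Lemma~\ref{tightlemma} with $\tau=1$ to get all the maps $\ell^{j-2i}\colon A_i\to A_{j-i}$, $i\ge 1$, without Gordan--Noether; you are in fact more explicit than the printed proof about the leftover case $i=0$, checking $\ell^j\circ F\neq 0$ through the structure of the tight annihilating scheme (with the fat point $\mathfrak m_p^2$ excluded by $T_1=3$) -- a point worth retaining, since the hypothesis $p\ge 5$ does not force $p>j$. For $k=2$ your route is genuinely different. The paper separates the non-complete-intersection case, where the Buchsbaum--Eisenbud structure theorem makes $(I_2)$ the saturated ideal of a length-$3$ scheme so that Lemma~\ref{tightlemma} applies again, from the complete-intersection case, which it settles by running through the relevant orbits of nets of conics in \cite[Table 1]{AEI}. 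You instead argue uniformly: $\ell^3\neq 0$ because cubes of linear forms span $R_3$ when $\cha\k\neq 2,3$, and $\ell\colon A_1\to A_2$ is injective for general $\ell$ via an incidence count, once one knows the net $\mathbb P(I_2)$ is not contained in the discriminant cubic of singular conics. The one input you should not leave implicit is the classification of planes inside that discriminant: that in characteristic $\neq 2$ the only two families are the conics containing a fixed line and the conics singular at a fixed point is a classical fact (the classification of degenerate nets, recoverable from \cite{AEI}), and it is precisely where your argument uses $\cha\k\neq 2$, as your $F=XYZ$ example shows it must. What your version buys is independence from the explicit orbit list and a cleaner conceptual reason for the characteristic hypothesis; what the paper's version buys is that the complete-intersection case reduces to a finite, mechanical verification. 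Both are valid.
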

\begin{proof} When $k\ge 3$ (so $j\ge 4$) Theorem \ref{pithm} shows that $R/J, J=(I_{\le 3})$ defines a tight annihilating scheme $\Z$ of $F$; this suffices to show that $A$ is SL as in the proof of Lemma \ref{acSLlem}. 

For $T=(1,3,3,1)$, in the non-CI case $I_2$ determines an algebra $A'=R/(I_2)$ of  Hilbert function $H=(1,3,3,\underline{3})$, by the structure theorem for codimension three AG algebras, so $A'$  has a tight annihilating scheme, which suffices to show $A$ is strong Lefschetz (Lemma \ref{tightlemma}). When $H=(1,3,3,1)$ and $I=(I_2)$ is a complete intersection, the classification of nets of conics
in \cite[Table 1]{AEI}, which is valid for  $\cha {\sf k}=p$ (ibid. p. 6, 80)
shows that the dual generator $F\in S_3$ is one of those listed in \#8b,c, \#7c, or \#6d of 
Table~1 there. It is straightforward to check that in each case $A=R/\Ann(f)$ is SL.\footnote{\#8c,\#7c in \cite[Table 1]{AEI} specialize to \#6d where $I=(x^2,y^2,z^2)$ which is evidently SL; and SL in the smooth case \#8b is immediate.} This completes the proof.
\end{proof}
We now give an example to show that the condition $\cha {\sf k}>j$ is necessary for our results about almost strong Lefschetz AG algebras.

\begin{example}[Counterexamples: AG algebras, $\cha  {\sf k}< j$]
\label{exp}
\begin{enumerate}[(a)]
\item The following example shows that the hypothesis $\cha {\sf k}=p>j$ is necessary in our Conjecture \ref{conj}. Take $p=3$ and let $I=(x^3,y^3,z^2)$ define an algebra with Hilbert function $T(A)=(1,3,5,5,3,1)$ and socle degree $j=5>p$. Then multiplication by $(x+y+z)^3$ is not full rank from $A_1$ to $A_4$.  Since $I$ is a monomial ideal, there exists a change of coordinates on $A$ which takes any general linear form to $x+y+z$.
\item The following example shows that codimension two AG algebras may not be almost SL when $\cha {\sf k}=p\le j$. Take $p=2$ and let  $I=(x^4,y^4)$ define an algebra with Hilbert function $T(A)=(1,2,3,4,3,2,1)$. Then $ (x+y)^2$ is not full rank from $A_2$ to $A_4$, so arguing that $x+y$ is general as in part (a) yields that $A$ is not almost SL.
\item The following example shows that a codimension three AG algebra may not be almost SL when $\cha {\sf k}=p\le j$. Take $p=3$ and let $I=(x^3,y^3,z^4)$ define an algebra with Hilbert function $T(A)=(1,3,6,8,8,6,3,1)$. Then $(x+y+z)^3$ is not full rank from $A_2$ to $A_5$, so arguing that $x+y+z$ is general as in part (a) yields that $A$ is not almost SL.
\item  In the  previous examples, also $\ell^j: A_0\to A_j$ is the 0 map. However, taking a complete intersection $I=(x^3,y^3,z^{14})$, of socle degree 17, and  $\cha {\sf k}=13$ we will have ${\ell=x+y+z}$ satisfying $\ell^{13}: A_2\to A_{15}$ is not of full rank ($z^2$ is in the kernel), but $\ell^{17}: A_0 \to A_{17}$ has the non-zero image $\binom{17}{2,2,13}x^2y^2z^{13}$. A check with Macaulay2 shows $\ell^{2i+1}:A_{8-i}\to A_{9+i}$ does not have full rank for $i\in \{5,6,7\}$, so $A$ is not almost strong Lefschetz. 
\end{enumerate}
\end{example}

There has been some substantial study of the Lefschetz properties and Jordan types of monomial ideals, in particular monomial complete intersections, in low characteristic, see, for example \cite{BrKa,LuNi}. Some overview is presented in \cite[Section 3.3]{IMM}, discussing the Clebsch-Gordan theorem in low characteristics.

Some have conjectured that all codimension three graded AG algebras over fields of characteristic ${\sf k}=0$ should be weak Lefschetz (as \cite[\S 1]{BMMNZ}), a somewhat ambitious conjecture itself, given what we know. We expect that not all AG codimension three algebras would be SL, even in characteristic zero. Given the substantial connection of the failure of WL in different contexts with geometry, for example \cite{HSS,BMMN}, and the fact that an open dense subset in each $\Gor(T)$ for codimension three is SL (Proposition \ref{gencompprop}), we could expect a strong geometric flavor to finding a counterexample of a codimension three non-SL AG algebra, for $\cha {\sf k}=0$ or $\cha{\sf k}=p>j$.

\begin{ack} We would like to thank Leila Khatami who contributed insight in many of our early discussions of these issues. We thank Chris McDaniel for helpful comments. We are grateful to the series of annual Lefschetz Properties In Algebra and Combinatorics conferences and workshops, some of which we each participated in, at Mittag Leffler (July 10-14, 2017),  CIRM Levico (June 25-29, 2018), CIRM Luminy (October 14-18, 2019), and MFO Oberwolfach (September 27-October 4, 2020). The fourth author is supported by NSF grant DMS--2101225.
 \end{ack}

\end{document}